\documentclass[reqno]{amsart}

\usepackage{amssymb}
\usepackage[british]{babel}
\usepackage{tikz-cd}
\usepackage{dsfont}

\newcommand{\ppmod}[1]{\!\pmod{#1}} 
\makeatletter
\newcommand{\dpmod}[1]{{\@displayfalse\ppmod{#1}}} 
\makeatother

\theoremstyle{plain}
\newtheorem{theorem}{Theorem}[section]
\newtheorem{proposition}[theorem]{Proposition}
\newtheorem{lemma}[theorem]{Lemma}
\newtheorem{corollary}[theorem]{Corollary}

\theoremstyle{remark}
\newtheorem*{remark}{Remark}

\theoremstyle{definition}
\newtheorem*{definition}{Definition}
\newtheorem{example}[theorem]{Example}

\DeclareMathOperator{\End}{End}
\DeclareMathOperator{\Hom}{Hom}

\DeclareMathOperator{\Imag}{im}
\DeclareMathOperator{\Id}{Id}
\DeclareMathOperator{\ord}{ord}
\DeclareMathOperator{\Aut}{Aut}
\DeclareMathOperator{\lann}{l-ann}
\DeclareMathOperator{\rann}{r-ann}

\begin{document}

\title{Algebraic aspects of general free skew extensions of rings}

\author{Vitor O. Ferreira}
\thanks{The first and third authors were partially supported by grant 2020/16594-0, 
S\~ao Paulo Research Foundation (FAPESP), Brazil}
\address{Department of Mathematics, University of S\~ao Paulo, S\~ao Paulo, SP, 05508-090, Brazil}
\email{vofer@ime.usp.br}
  
\author{\'Erica Z. Fornaroli}
\address{Department of Mathematics, State University of Maring\'a, Maring\'a, PR, 
87020-900, Brazil}
	\email{ezancanella@uem.br}

\author{Javier S\'anchez}
\address{Department of Mathematics, University of S\~ao Paulo, S\~ao Paulo, SP, 05508-090, Brazil}
\email{jsanchez@ime.usp.br}


\subjclass[2020]{16S10, 16S36, 16W60}

\keywords{Skew free extensions, multivariate skew polynomial rings, Ore extensions}

\begin{abstract}
We consider skew free extensions of rings, also known as free multivariate skew polynomial rings,
and explore some of the algebraic aspects of this construction. We give different characterizations of such rings and present conditions for such a ring to be a domain,
to be embeddable in a series ring and to be prime.
\end{abstract}

\maketitle

\section*{Introduction}

In this article, we investigate purely algebraic aspects of certain families of freely
generated ring extensions, hereby called free skew extensions. The systematic study of
such ring extensions started with \cite{MK2019, uM2020, uM2022}, under the name
``multivariate skew polynomials'', where the case of a division ring of coefficients is
considered and the main focus was in evaluations of such polynomials.
Other occurrences of special cases of this construction
are \cite{BG2020, pG2020, BT2024}. In \cite{LM2024}, the authors consider more general
skew free extensions, with an arbitrary ring of coefficients, and the center of
such rings is described. This last article mentions that the
first appearance of free skew extensions is \cite{eK1966}, but no further developments
have been registered prior to \cite{MK2019}.

The construction of free skew extensions generalizes that of Ore extensions, or skew
polynomial rings. We recall
that given a ring \(R\), a ring homomorphism \(\sigma\colon R\to R\) and a right
\(\sigma\)-derivation \(\delta\colon R\to R\), that is, an additive map satisfying
\(\delta(ab)=\delta(a)\sigma(b) + a\delta(b)\), for all \(a,b\in R\), the Ore
extension \(R[x;\delta,\sigma]\) of \(R\) determined by \(\sigma\) and \(\delta\)
is the ring formed by polynomials
\[
\sum_{i=0}^n x^ia_i,
\]
in the indeterminate \(x\) with coefficients \(a_i\in R\), whose multiplication
satisfies the rule
\[
ax = x\sigma(a) + \delta(a),
\]
for all \(a\in R\). Skew free extensions are similar constructions involving
more than one indeterminate. A precise definition is given
in Section~\ref{sec:def}, but informally, a skew free extension \(S\) of a ring \(R\)
is a ring whose elements are polynomials of the form
\[
\sum_{w}wa_w,
\]
where \(w\) are monomials in the noncommuting indeterminates \(x_1,\dots,x_n\)
with coefficients \(a_w\) from \(R\) satisfying relations of the form
\[
ax_j = \sum_{i=1}^nx_i\sigma_{ij}(a) + \delta_{j}(a),
\]
for all \(a\in R\) and \(j=1,\dots, n\). The precise conditions on the
maps \(\sigma_{ij},\delta_j\colon R\to R\) that impose (and are consequences
of) the associativity of the multiplication of \(S\) are detailed in
Section~\ref{sec:def}. We shall see that, unlike iterated Ore extensions, skew free 
extensions behave more like free algebras than polynomial algebras. 

In Section~\ref{sec:def}, after defining general skew free extensions, we
show that not only these rings exist (using a different argument from the
one in \cite{MK2019}) but also that these rings are unique. Section~\ref{sec:tr}
gives an alternative construction using tensor rings, showing the narrow
relation that skew free extensions have with free algebras. The consideration of the problem of
when a skew free extension is a domain occupies Section~\ref{sec:domain},
where the concept of megainjectivity of a ring homomorphism is introduced and
shown to be the precise condition for the absence of zero divisors in a free skew extension.

The construction of a series ring containing a skew free extension is the
subject of Section~\ref{sec:series}. We prove that the expected condition of
local nilpotence on the derivation is sufficient to guarantee that the multiplication defined
in a skew free extension can be extended to handle infinite sums. This had already been proved
in \cite{GSZ2019} for the case of ordinary one-variable Ore extensions. Finally, in
Section~\ref{sec:prime}, we tackle the problem of characterizing prime free
skew extensions, but we must restrict to the case of triangular homomorphism.
It is shown that a known condition for Ore extensions, previously obtained in \cite{LLM1997},
has an analogy in the context of
various indeterminates.

We deviate from the convention adopted in the above cited papers of putting coefficients
on the left and, instead, place them on the right. This was done in order to present
our results in a more direct way.

\medskip

Throughout the paper, rings are associative and contain an identity element, which is preserved
by homomorphisms and acts trivially on modules.

\section{Definition, existence and uniqueness}\label{sec:def}

This section is devoted to the proof of existence and uniqueness of skew free
extensions. We also show that these rings satisfy a universal mapping property.

\medskip

Let \(R\) be a ring. An \emph{\(R\)-ring} is an ordered pair \((A,\lambda_A)\), where
\(A\) is a ring and \(\lambda_A\colon R\to A\) is a ring homomorphism. Given \(R\)-rings
\((A,\lambda_A)\) and \((B,\lambda_B)\) an \emph{\(R\)-ring homomorphism}  from \((A,\lambda_A)\) into \((B,\lambda_B)\) is a ring
homomorphism \(\varphi\colon A\to B\) such that \(\varphi\lambda_A=\lambda_B\). Clearly, if
\((A,\lambda_A)\) is an \(R\)-ring and \(I\) is an ideal of \(A\), then the quotient ring
\(A/I\) has a natural \(R\)-ring structure which makes the canonical surjective
ring homomorphism \(A\to A/I\) an \(R\)-ring homomorphism.

Recall that an \emph{\(R\)-bimodule} is an additive abelian group \(M\) which is a left
\(R\)-module, a right \(R\)-module and such that
\[
(rx) s = r (x s)
\]
for all \(r,s\in R\) and \(x\in M\). So, if \((A,\lambda_A)\)
is an \(R\)-ring, then \(A\) has a natural \(R\)-bimodule structure given by
\[
r a = \lambda_A(r)a
\quad\text{and}\quad
a  r = a\lambda_A(r),
\]
for all \(r\in R\) and \(a\in A\). This \(R\)-bimodule structure on \(A\) is such that
\begin{equation}\label{eq:rring}
r (ab) = (r a)b, \quad (ar)b=a(rb), \quad (ab) r = a(b r),
\end{equation}
for all \(r\in R\) and \(a,b\in A\). Conversely, given an \(R\)-bimodule structure on a
ring \(A\) satisfying \eqref{eq:rring}, then \(A\) becomes an \(R\)-ring via 
\[
\begin{array}{rcl}
\lambda_A\colon R & \longrightarrow & A\\ r&\longmapsto &r 1_A
\end{array}
\]
We shall switch from these two viewpoints according to convenience.

If \(A\) and \(B\) are \(R\)-rings and \(\varphi\colon A\to B\)
is a ring homomorphism that is also a left (or right) \(R\)-module homomorphism,
then \(\varphi\) is an \(R\)-ring homomorphism. 

\medskip

Let \(R\) be a ring, let \(n\) be a positive integer
and let \(\sigma\colon R\to M_n(R)\) be a ring homomorphism. 
Let \(R^n\) stand for the free left \(R\)-module of rank \(n\), hereby identified with
the set of row vectors with \(n\) entries from \(R\). An additive map \(\delta\colon R\to R^n\)
satisfying
\begin{equation}\label{eq:lsd}
\delta(rs) = \delta(r)\sigma(s)+r\delta(s),
\end{equation}
for all \(r,s\in R\), will be called a \emph{right \(\sigma\)-derivation}. The concatenation
\(\delta(r)\sigma(s)\) on the right-hand side of \eqref{eq:lsd} stands for the matrix product
of the \(1\times n\) matrix \(\delta(r)\) by the \(n\times n\) matrix \(\sigma(s)\).

Writing \(\sigma_{ij}(r)\) for the \((i,j)\)-entry of the matrix \(\sigma(r)\) and
\(\delta_j(r)\) for the \(j\)-component of the row vector \(\delta(r)\), it follows that
\(\sigma_{ij}\) and \(\delta_j\) are additive endomorphisms of \(R\) satisfying
\[
\sigma_{ij}(rs)=\sum_{k=1}^n\sigma_{ik}(r)\sigma_{kj}(s)
\]
\and
\[
\delta_j(rs) = \sum_{i=1}^n\delta_i(r)\sigma_{ij}(s)+r\delta_j(s),
\]
for all \(r,s\in R\). Moreover, 
\[
\sigma_{ij}(1)=\begin{cases} 1, &\text{if \(i=j\)}\\ 0, &\text{otherwise}\end{cases}
\quad\text{and}\quad\delta_j(1)=0.
\]

Given a ring \(S\), we shall write \(S^{\times}\) for the monoid structure on
\(S\) with operation given by the multiplication.

\begin{definition}
Let \(R\) be a ring, let \(n\) be a positive integer,
let \(\sigma\colon R\to M_n(R)\) be a ring homomorphism and let
\(\delta \colon R\to R^n\) be a right \(\sigma\)-derivation. An \(R\)-ring
\(S\) containing an \(n\)-element subset \(X=\{x_1,\dots,x_n\}\) such that 
\renewcommand{\labelenumi}{(\roman{enumi})} 
\begin{enumerate}
  \item the submonoid \(\langle X\rangle\) of \(S^{\times}\) generated by \(X\) is free on \(X\),
	\item \(S\) is a free right \(R\)-module with basis \(\langle X\rangle\), and
	\item \(rx_j=\sum_{i=1}^nx_i\sigma_{ij}(r) + \delta_j(r)\), for all \(j=1,\dots, n\) and
	\(r\in R\),
\end{enumerate} 
is called a \emph{\((\sigma,\delta)\)-skew free extension}
of \(R\) generated by \(X\).
\end{definition}

We note that since \(1_S\in\langle X\rangle\) the ring homomorphism \(R\to S\) affording
the \(R\)-ring structure on a skew free extension \(S\) of \(R\)
is injective and, therefore, \(R\) can be regarded
as a subring of \(S\).

Our first result, following the ideas in \cite[Prop.~2.3]{GW2004} and \cite[1.2.3]{MR2001}, shows that skew free extensions always exist.

\begin{theorem}\label{th:existsfe}
Let \(R\) be a ring, let \(n\) be a positive integer,
let \(\sigma\colon R\to M_n(R)\) be a ring homomorphism and let
\(\delta \colon R\to R^n\) be a right \(\sigma\)-derivation. Then there exists
a \((\sigma,\delta)\)-free skew extension of \(R\) generated by \(n\) elements.
\end{theorem}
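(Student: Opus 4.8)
The plan is to construct the skew free extension explicitly as an endomorphism ring acting on a free right $R$-module, mimicking the regular representation. Let $W = \langle X \rangle$ be the free monoid on an $n$-element set $X = \{x_1,\dots,x_n\}$, and let $S_0$ be the free right $R$-module with basis $W$; write a typical element as $\sum_w w a_w$ with $a_w \in R$, almost all zero. I would then construct a ring homomorphism $\mu \colon S_0 \to \End(S_0)$ of the additive group of $S_0$ into its endomorphism ring (acting on the left, say), so that $S_0$ becomes a ring with the prescribed multiplication. Equivalently, I would directly define left multiplication operators.

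First I would define, for each $j = 1,\dots,n$, an additive map $L_{x_j}\colon S_0 \to S_0$ by $L_{x_j}(w a_w) = (x_j w) a_w$ on basis elements, extended additively; this is just prepending $x_j$. Next, for each $r \in R$, I would define an additive map $L_r \colon S_0 \to S_0$ capturing condition (iii). The key is to define $L_r$ recursively on the length of the monomial: set $L_r(1 \cdot a) = 1 \cdot (ra)$, and for a monomial $w = x_j w'$ of positive length, use the rewriting rule
\[
L_r(x_j w' a) = \sum_{i=1}^n x_i\, L_{\sigma_{ij}(r)}(w' a) + \delta_j(r)\, (w' a),
\]
where the last term lies in the length-$(\ell(w')+1)$ part but has strictly shorter monomials, so the recursion on total length is well-founded. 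One checks $L_r$ is well-defined and additive. Then I would define, for $s_0 = \sum_w w a_w \in S_0$, the operator $L_{s_0} = \sum_w L_w \circ \rho_{a_w}$, where $\rho_{a}$ is right multiplication by $a \in R$ (which is an $R$-module endomorphism and hence already available), and $L_w = L_{x_{j_1}} \cdots L_{x_{j_k}}$ for $w = x_{j_1}\cdots x_{j_k}$. Define the product on $S_0$ by $s_0 \cdot t_0 := L_{s_0}(t_0)$.

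The main obstacle — and the bulk of the work — is verifying associativity, i.e.\ that $s \mapsto L_s$ is multiplicative, $L_{st} = L_s \circ L_t$. The reductions are: (a) $L_{r}$ is a ring homomorphism from $R$ into $\End(S_0)$, which amounts to the relations $\sigma_{ij}(rs) = \sum_k \sigma_{ik}(r)\sigma_{kj}(s)$ and $\delta_j(rs) = \sum_i \delta_i(r)\sigma_{ij}(s) + r\delta_j(s)$ recorded in the excerpt — a calculation by induction on monomial length; (b) $L_{x_j} \circ L_r = L_{\sum_i x_i \sigma_{ij}(r) + \delta_j(r)}$ as operators, which is essentially the definition of $L_r$ together with the fact that $L_{x_j}$ commutes appropriately with the recursion; and (c) $L_{x_j}\circ \rho_a = \rho_a \circ L_{x_j}$ and compatibility of $\rho$ with the $R$-action, which are immediate from the $R$-bimodule structure. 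From (a)–(c) one gets $L_{s t} = L_s L_t$ on all of $S_0$ by expanding both sides on basis monomials and inducting. Since $L_1 = \mathrm{id}$, this makes $S_0$ a ring with identity $1 \in W$.

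Finally I would verify the three defining conditions for $S := S_0$ with this multiplication and with $\lambda_S\colon R \to S$, $r \mapsto 1\cdot r$: condition (ii) holds because $S_0$ was built as the free right $R$-module on $W$ and one checks the right $R$-module structure coming from the ring multiplication agrees with the original one (i.e.\ $(w a)\cdot r = w(ar)$, immediate since $L_{wa}(1\cdot r) = L_w \rho_a(1 \cdot r) = w(ar)$); condition (i), that $\langle X \rangle$ is free on $X$ inside $S^\times$, holds because $L_{x_{j_1}}\cdots L_{x_{j_k}}(1) = x_{j_1}\cdots x_{j_k}$ recovers distinct basis elements for distinct words, so the monoid map $W \to S^\times$ is injective and clearly a homomorphism; and condition (iii) is precisely (b) evaluated appropriately, namely $r x_j = L_r(x_j\cdot 1) = L_r(x_j) = \sum_i x_i \sigma_{ij}(r) + \delta_j(r)$ using the recursive definition of $L_r$ on the length-one monomial $x_j$. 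This completes the construction.
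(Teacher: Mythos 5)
There is one genuine flaw in the construction as written: your definition of the left-multiplication operator attached to a general element, \(L_{s_0}=\sum_w L_w\circ\rho_{a_w}\) with \(\rho_{a}\) denoting \emph{right} multiplication by \(a\), does not produce the skew multiplication. Applied to \(t_0\) it yields \(\sum_w L_w(t_0a_w)\), i.e.\ \((wa)\cdot(vb)=(wv)(ba)\): the coefficient \(a_w\) is carried past \(t_0\) untouched, so \(\sigma\) and \(\delta\) never enter the cross terms. In particular \(1\cdot r\) would act as \(\rho_r\), giving \(r\cdot x_j=x_jr\) rather than \(\sum_i x_i\sigma_{ij}(r)+\delta_j(r)\), which contradicts your own final verification of condition (iii), where you correctly use the recursively defined \(L_r\). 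The formula must read \(L_{s_0}=\sum_w L_w\circ L_{a_w}\). (Note also that \(\rho_a\colon t\mapsto ta\) is not a right \(R\)-module endomorphism of \(S_0\) for noncentral \(a\), so the parenthetical justification is off.) Relatedly, your identity (b) has the composition in the wrong order: the relation \(rx_j=\sum_i x_i\sigma_{ij}(r)+\delta_j(r)\) corresponds to \(L_r\circ L_{x_j}=L_{\sum_i x_i\sigma_{ij}(r)+\delta_j(r)}\), whereas \(L_{x_j}\circ L_r\) is just \(L_{x_jr}\). With these two corrections, the reduction of \(L_{st}=L_sL_t\) to (a) and the corrected (b) does go through by induction on monomial length (the step \(L_{L_a(v)}=L_aL_v\) is the crux), so the strategy is salvageable; but as literally written the product being defined is the wrong one.

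Once repaired, your route differs from the paper's in a meaningful way. You use the left regular representation: define the product on the free right \(R\)-module with basis \(\langle X\rangle\) by recursive rewriting, then prove associativity by hand, which is where essentially all the work lies. The paper instead works from the outset inside the endomorphism ring \(E\) of the abelian group \(R^{\mathcal{F}}\) of all functions \(\mathcal{F}\to R\): the elements \(x_j\) and the copy of \(R\) are defined as explicit endomorphisms acting on the right, so associativity is automatic, and the labour shifts to verifying the commutation relation as an operator identity and checking that \(\langle X\rangle\) is free and right \(R\)-linearly independent. The paper's device buys a shorter argument at the cost of a less transparent definition of the \(x_j\); your approach gives the multiplication table directly but must pay for associativity with the inductions you only sketch.
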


\begin{proof}
  Let \(\mathcal{F}\) be the free monoid on the \(n\)-element set \(\{z_1,\dots,z_n\}\) and
	let \(R^{\mathcal{F}}\) denote the set of functions \(\mathcal{F}\to R\). Then 
	\(R^{\mathcal{F}}\)
	has a natural abelian group structure such that \((f+g)(m)=f(m)+g(m)\), for all
	\(f,g\in R^{\mathcal{F}}\) and \(m\in\mathcal{F}\). Let \(E\) be the ring of group
	endomorphisms of \(R^{\mathcal{F}}\), acting on the elements of \(R^{\mathcal{F}}\)
	on the right. Each \(r\in R\) defines \(\lambda(r)\in E\) by right multiplication:
	\(\bigl(f\lambda(r)\bigr)(m)=f(m)r\), for all \(f\in R^{\mathcal{F}}\) and \(m\in\mathcal{F}\). The
	map \(r\mapsto\lambda(r)\) is easily seen to be an injective ring homomorphism \(R\to E\),
	defining in \(E\) an \(R\)-ring structure. Moreover, for each \(j=1,\dots, n\), one has
	\(x_j\in E\) defined by
	\[
	(fx_j)(mz_i)=	\sigma_{ij}\bigl(f(m)\bigr)+\delta_j\bigl(f(mz_i)\bigr), \quad\text{for \(i=1,\dots,n\) and \(m\in\mathcal{F}\)}
	\]
	and
	\[
	(fx_j)(\mathbf{1})= \delta_j\bigl(f(\mathbf{1})\bigr),
	\]
	where \(\mathbf{1}\) denotes the unity element of \(\mathcal{F}\).
	
	For each \(m\in\mathcal{F}\), let \(\overline{m}\in R^{\mathcal{F}}\) be defined by
	\[
	\overline{m}(l)=\begin{cases} 1, & \text{if \(l=m\)}\\ 0, &\text{otherwise.}\end{cases}
	\]
	We note that the function \(m\mapsto\overline{m}\), from \(\mathcal{F}\) into \(R^{\mathcal{F}}\),
	is injective.
	By the definition of \(x_j\), on has
	\[
	\overline{m}x_j=\overline{mz_j},
	\]
	for all \(m\in\mathcal{F}\), and, so it follows by induction on \(k\) that
	\[
	\overline{\mathbf{1}}(x_{j_1}\dotsm x_{j_k})=\overline{z_{j_1}\dotsm z_{j_k}}.
	\]
  Hence, the submonoid \(\langle X\rangle\) of \(E^{\times}\) generated by \(X=\{x_1,\dots,x_n\}\) 
	is free on
	\(X\). In particular, there exists a monoid isomorphism \(\phi\colon \langle X\rangle \to
	\mathcal{F}\) such that \(\phi(x_j)=z_j\). Given a right linear combination
	\[
	\sum_{m\in\mathcal{F}}mr_m,
	\]
	with \(r_m\in R\), one has
	\[
  \overline{\mathbf{1}}\left(\sum_{m\in\mathcal{F}}mr_m\right) = 
	\sum_{m\in\mathcal{F}}\overline{\phi(m)}r_m,
	\]
	and this endomorphism evaluated at \(\phi(m)\) gives \(r_{\phi(m)}\). Therefore,
	the set \(\langle X\rangle\) is right linearly
	independent over \(R\) in \(E\).
	
	Let \(r\in R\). We show that, in \(E\), for each \(j=1,\dots,n\), one  has
	\(rx_j=\sum_{k=1}^nx_k\sigma_{kj}(r) + \delta_j(r)\). Indeed, given
	\(f\in R^{\mathcal{F}}\) and \(m\in\mathcal{F}\), 
	\begin{equation*}\begin{split}
	\bigl(f(rx_j)\bigr)(mz_i) & = \bigl((fr)x_j\bigr)(mz_i)\\
	& = \sigma_{ij}\bigl((fr)(m)\bigr) + \delta_j\bigl((fr)(mz_i)\bigr)\\
	& = \sigma_{ij}\bigl(f(m)r\bigr)+\delta_j\bigl(f(mz_i)r\bigr)\\
	& =\sum_{k=1}^n\sigma_{ik}\bigl(f(m)\bigr)\sigma_{kj}(r) + 
	  \sum_{k=1}^n\delta_k\bigl(f(mz_i)\bigr)\sigma_{kj}(r) + f(mz_i)\delta_j(r)\\
	& = \sum_{k=1}^n\Bigl(\sigma_{ik}\bigl(f(m)\bigr)+\delta_k\bigl(f(mz_i)\bigr)\Bigr)\sigma_{kj}(r) + f(mz_i)\delta_j(r)\\
	& = \sum_{k=1}^n(fx_k)(mz_i)\sigma_{kj}(r)+ f(mz_i)\delta_j(r)\\
	& = \sum_{k=1}^n\bigl((fx_k)\sigma_{kj}(r)\bigr)(mz_i) + \bigl(f\delta_j(r)\bigr)(mz_i)\\
	& = \sum_{k=1}^n \bigl(f(x_k\sigma_{kj})(r)+f\delta(r)\bigr)(mz_i)\\
	& = \biggl(f\Bigl(\sum_{k=1}^nx_k\sigma_{kj}(r)+\delta_j(r)\Bigr)\biggr)(mz_i).
	\end{split}\end{equation*}
	Also,
	\begin{equation*}\begin{split}
	\bigl(f(rx_j)\bigr)(\mathbf{1}) & = \bigl((fr)x_j\bigr)(\mathbf{1}) = 
	\delta_j\bigl((fr)(\mathbf{1})\bigr)=
	   \delta_j\bigl(f(\mathbf{1})r\bigr)\\
	& = \sum_{k=1}^n \delta_j\bigl(f(\mathbf{1})\bigr)\sigma_{kj}(r) + f(\mathbf{1})\delta_j(r)\\
	& = \sum_{k=1}^n (fx_k)(\mathbf{1})\sigma_{kj}(r) + f(\mathbf{1})\delta_j(r)\\
	& = \sum_{k=1}^n\bigl((fx_k)\sigma_{kj}(r)\bigr)(\mathbf{1})+ f(\mathbf{1})\delta_j(r)\\
	& = \biggl(f\Bigr(\sum_{k=1}^n x_k\sigma_{kj}(r)+\delta_j(r)\Bigl)\biggr)(\mathbf{1}).
	\end{split}\end{equation*}
	Hence, the operators \(rx_j\) and \(\sum_{k=1}^nx_k\sigma_{kj}(r) + \delta_j(r)\)
	do coincide. Besides, it follows that, in \(E\),
	\begin{equation}\label{eq:multS-pre}
	Rx_1+\dots+Rx_n\subseteq x_1R+\dots+x_nR+R.
	\end{equation}
	If \(w\in\langle X\rangle\) is such that \(w=x_{i_1}x_{i_2}\dotsm x_{i_r}\), with
\(i_k\in\{1,\dots,n\}\) for all \(k=1,\dots, r\), we say that \(w\) has
\emph{length} \(r\) and write \(\lvert w\rvert = r\). We also set \(\lvert 1\rvert = 0\).
As a consequence of \eqref{eq:multS-pre}, an inductive argument on the length \(\lvert w\rvert\) of 
	\(w\in\langle X\rangle\), gives
	\begin{equation}\label{eq:multS}
	Rw\subseteq \sum_{\lvert v \rvert \leq \lvert w \rvert} vR.
	\end{equation}
	Let \(S\) be the (free) right \(R\)-submodule of \(E\) generated by \(\langle X\rangle\).
	Given \(w_1,w_2\in\langle X \rangle\), it follows from \eqref{eq:multS} that
	\(w_1Rw_2R\subseteq S\); this proves that \(S\) is a sub-\(R\)-ring of \(E\). By what
	has been shown above, \(S\) is a \((\sigma,\delta)\)-free skew extension of \(R\) generated
	by \(x_1,\dots,x_n\). 
\end{proof}

Having dealt with the existence of free skew extensions, we now turn to uniqueness,
which will follow from the fact that these rings satisfy a universal property
(stated, in a special case, without proof in \cite[Lemma~2]{uM2022}) as the
next result shows.

\begin{proposition}\label{prop:univsfe}
Let \(R\) be a ring, let \(n\) be a positive integer,
let \(\sigma\colon R\to M_n(R)\) be a ring homomorphism, let
\(\delta \colon R\to R^n\) be a right \(\sigma\)-derivation and let \(S\)
be a \((\sigma,\delta)\)-free skew extension of \(R\) 
generated by \(x_1,\dots,x_n\).
Given an \(R\)-ring \(A\) and \(a_1,\dots,a_n\) elements of
\(A\) such that
\begin{equation}\label{eq:upsfe}
ra_j=\sum_{i=1}^n a_i\sigma_{ij}(r) + \delta_j(r),
\quad\text{for all \(j=1,\dots,n\) and \(r\in R\),}
\end{equation}
there exists a unique \(R\)-ring homomorphism \(\varphi\colon S\to A\)
such that \(\varphi(x_i)=a_i\), for all \(i=1,\dots,n\).
\end{proposition}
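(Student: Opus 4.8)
The strategy is to build \(\varphi\) on the free right \(R\)-module basis \(\langle X\rangle\) of \(S\) and to extend it by additivity and right \(R\)-linearity, then to check that the result is multiplicative. Explicitly, for a word \(w=x_{i_1}\dotsm x_{i_k}\in\langle X\rangle\) (with \(w=1_S\) when \(k=0\)) I set \(\varphi(w)=a_{i_1}\dotsm a_{i_k}\), the empty product being \(1_A\), and for a general element \(\sum_w w r_w\in S\) I put \(\varphi\bigl(\sum_w w r_w\bigr)=\sum_w \varphi(w)r_w\), using the right \(R\)-module structure on \(A\). Condition (ii) guarantees that this is a well-defined additive map; it is right \(R\)-linear by construction, \(\varphi(1_S)=1_A\), and \(\varphi(\lambda_S(r))=\varphi(1_S\,r)=1_A\,r=\lambda_A(r)\). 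By the observation made above that a ring homomorphism between \(R\)-rings which is right \(R\)-linear is automatically an \(R\)-ring homomorphism, once \(\varphi\) is shown to be a ring homomorphism we are done with existence; and uniqueness is clear, since any \(R\)-ring homomorphism \(\psi\) sending \(x_i\) to \(a_i\) is multiplicative, hence sends \(w=x_{i_1}\dotsm x_{i_k}\) to \(a_{i_1}\dotsm a_{i_k}\), and is right \(R\)-linear (as every \(R\)-ring homomorphism is), hence agrees with \(\varphi\) on all of \(S=\bigoplus_w wR\). So the whole content is the multiplicativity of \(\varphi\).

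I would organize the proof of \(\varphi(st)=\varphi(s)\varphi(t)\) in three steps, using repeatedly the \(R\)-bimodule identities \eqref{eq:rring}. The first and crucial step is the identity
\[
\varphi(u x_j)=\varphi(u)\,a_j \qquad\text{for all } u\in S \text{ and } j=1,\dots,n.
\]
To prove it, write \(u=\sum_w w r_w\), expand \(u x_j=\sum_w w(r_w x_j)\), and replace \(r_w x_j\) by \(\sum_i x_i\sigma_{ij}(r_w)+\delta_j(r_w)\) using condition (iii); this re-expresses \(u x_j\) on the basis \(\langle X\rangle\) as \(\sum_w\bigl(\sum_i (wx_i)\sigma_{ij}(r_w)+w\,\delta_j(r_w)\bigr)\). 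Applying \(\varphi\) and using \(\varphi(wx_i)=\varphi(w)a_i\) — which holds by the very definition of \(\varphi\) on \(\langle X\rangle\), since \(wx_i\in\langle X\rangle\) — together with \eqref{eq:rring}, one obtains \(\sum_w\varphi(w)\bigl(\sum_i a_i\sigma_{ij}(r_w)+\delta_j(r_w)\bigr)\); hypothesis \eqref{eq:upsfe} rewrites the inner term as \(r_w a_j\), and a last application of \eqref{eq:rring} gives \(\sum_w\bigl(\varphi(w)r_w\bigr)a_j=\varphi(u)\,a_j\).

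Second, I would show \(\varphi(sw)=\varphi(s)\varphi(w)\) for all \(s\in S\) and \(w\in\langle X\rangle\) by induction on the length \(\lvert w\rvert\): the case \(\lvert w\rvert=0\) is trivial, and for \(w=w'x_j\) one has \(\varphi\bigl(s(w'x_j)\bigr)=\varphi\bigl((sw')x_j\bigr)=\varphi(sw')a_j=\bigl(\varphi(s)\varphi(w')\bigr)a_j=\varphi(s)\bigl(\varphi(w')a_j\bigr)=\varphi(s)\varphi(w'x_j)\), using the first step, the induction hypothesis and \eqref{eq:rring}. Third, for general \(t=\sum_w w r_w\in S\) one gets \(\varphi(st)=\sum_w\varphi(sw)r_w=\sum_w\bigl(\varphi(s)\varphi(w)\bigr)r_w=\varphi(s)\sum_w\varphi(w)r_w=\varphi(s)\varphi(t)\), again by right \(R\)-linearity and \eqref{eq:rring}. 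Together with \(\varphi(1_S)=1_A\), this makes \(\varphi\) a ring homomorphism, which finishes the argument.

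The proof presents no conceptual obstacle; the only care needed is the bookkeeping with the three associativity rules of \eqref{eq:rring}, and the organization of the reductions — key identity, then induction on \(\lvert w\rvert\), then general \(t\) — so that condition (iii) of the Definition and hypothesis \eqref{eq:upsfe} are invoked exactly once, at the bottom level.
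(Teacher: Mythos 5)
Your proposal is correct and follows essentially the same route as the paper: define \(\varphi\) on the free basis \(\langle X\rangle\) via the multiplicative extension \(w\mapsto a_{i_1}\dotsm a_{i_k}\) (the paper phrases this as the monoid morphism \(\rho\colon\langle X\rangle\to A^{\times}\) afforded by freeness of \(\langle X\rangle\)), extend right \(R\)-linearly, establish the key identity \(\varphi(fx_j)=\varphi(f)a_j\) by the same computation combining condition (iii) with \eqref{eq:upsfe}, and then conclude by induction on \(\lvert w\rvert\) and right \(R\)-linearity, with uniqueness following from generation. No substantive differences.
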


\begin{proof}
Since the submonoid \(\langle X\rangle\) of \(S^{\times}\)
is free on \(X=\{x_1,\dots,x_n\}\), there is a monoid morphism
\(\rho\colon \langle X\rangle \to A^{\times}\) such that \(\rho(x_i)=a_i\),
for all \(i=1,\dots,n\). Let \(\varphi\colon S\to A\) be the
right \(R\)-module homomorphism such that \(\varphi(w)=\rho(w)\),
for all \(w\in\langle X\rangle\). Let us show that \(\varphi\) is
an \(R\)-ring homomorphism. First, note that if \(f=\sum_{w\in\langle X\rangle}
wr_w\) is an arbitrary element of \(S\), then, for \(j=1,\dots,n\), one has
\begin{equation*}\begin{split}
\varphi(fx_j) & = \varphi\biggl(\sum_{i=1}^n\sum_{w\in\langle X\rangle}wx_i\sigma_{ij}(r_w)
    + \sum_{w\in\langle X\rangle} w\delta_j(r_w)\biggr)\\
	& = \sum_{i=1}^n\sum_{w\in\langle X\rangle}\rho(wx_i)\sigma_{ij}(r_w) 
	  + \sum_{w\in\langle X\rangle}\rho(w)\delta_j(r_w)\\
	& = \sum_{w\in\langle X\rangle}\rho(w)\biggl(\sum_{i=1}^na_i\sigma_{ij}(r_w) + \delta_j(r_w)
	  \biggr)\\
	& = \sum_{w\in\langle X\rangle}\rho(w)r_wa_j\\
	& = \varphi(f)a_j.
\end{split}\end{equation*}
Hence, by an inductive argument on the length of a monomial \(w\), one proves that
\[
\varphi(fw)=\varphi(f)\rho(w),
\]
for all \(w\in\langle X\rangle\). Now, given \(f'=\sum_{w\in\langle X\rangle} wr_w'\in S\),
one has
\[
\varphi(ff')=\varphi\biggl(f\sum_{w\in\langle X\rangle}wr_w'\biggr)
=\sum_{w\in\langle X\rangle} \varphi(fw)r_w' = \sum_{w\in\langle X\rangle} \varphi(f)\rho(w)
r_w'=\varphi(f)\varphi(f').
\]
It follows that \(\varphi\) is an \(R\)-ring homomorphism such that \(\varphi(x_i)=a_i\),
for all \(i=1,\dots,n\). 

Uniqueness of \(\varphi\) follows from the fact that
\(S\) is generated by \(\{x_1,\dots,x_n\}\) as an \(R\)-ring.
\end{proof}

\begin{corollary}\label{cor:uniqsfe}
Let \(R\) be a ring, let \(n\) be a positive integer,
let \(\sigma\colon R\to M_n(R)\) be a ring homomorphism, let
\(\delta \colon R\to R^n\) be a right \(\sigma\)-derivation
and let \(S_1\)
be a \((\sigma,\delta)\)-free skew extension of \(R\) 
generated by \(x_1,\dots,x_n\).
Let \(S_2\) be an \(R\)-ring containing
elements \(y_1,\dots,y_n\) such that 
\begin{equation*}
ry_j=\sum_{i=1}^n y_i\sigma_{ij}(r) + \delta_j(r),
\quad\text{for all \(j=1,\dots,n\) and \(r\in R\),}
\end{equation*} and such that
for any \(R\)-ring \(A\) and
elements \(a_1,\dots,a_n\) satisfying \eqref{eq:upsfe}
there exists a unique \(R\)-ring homomorphism \(\psi\colon S_2\to A\) such that 
\(\psi(y_i)=a_i\), for all \(i=1,\dots,n\). Then there is an \(R\)-ring
isomorphism \(S_1\to S_2\), sending \(x_i\) to \(y_i\), for all \(i=1,\dots,n\).

In particular if \(S_1\) and \(S_2\) are \((\sigma,\delta)\)-free skew extensions of \(R\)
generated by \(x_1,\dots,x_n\) and \(y_1,\dots,y_n\), respectively, then 
there exists a unique \(R\)-ring isomorphism \(S_1\to S_2\) sending \(x_i\) to \(y_i\)
for all \(i=1,\dots,n\).
\end{corollary}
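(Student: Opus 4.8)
The plan is to run the standard argument that an object characterised by a universal mapping property is determined up to unique isomorphism, being careful that in the first assertion the two halves of the argument draw on two different uniqueness statements.

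First I would produce the two comparison homomorphisms. Applying Proposition~\ref{prop:univsfe} to the free skew extension \(S_1\), taking for the \(R\)-ring \(A\) the ring \(S_2\) and for the distinguished elements \(a_i=y_i\) (which satisfy \eqref{eq:upsfe} by hypothesis), yields a unique \(R\)-ring homomorphism \(\varphi\colon S_1\to S_2\) with \(\varphi(x_i)=y_i\) for all \(i=1,\dots,n\). In the other direction, the elements \(x_1,\dots,x_n\) of the \(R\)-ring \(S_1\) satisfy \eqref{eq:upsfe} precisely because \(S_1\) is a \((\sigma,\delta)\)-free skew extension, so the universal property assumed of \(S_2\) (with \(A=S_1\), \(a_i=x_i\)) provides a unique \(R\)-ring homomorphism \(\psi\colon S_2\to S_1\) with \(\psi(y_i)=x_i\) for all \(i\).

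Next I would check that \(\varphi\) and \(\psi\) are mutually inverse. The composite \(\psi\varphi\colon S_1\to S_1\) is an \(R\)-ring endomorphism fixing each \(x_i\); since \(\mathrm{Id}_{S_1}\) does the same and, by the uniqueness clause of Proposition~\ref{prop:univsfe} applied with \(A=S_1\) and \(a_i=x_i\), there is only one such homomorphism, it follows that \(\psi\varphi=\mathrm{Id}_{S_1}\). Symmetrically, \(\varphi\psi\colon S_2\to S_2\) fixes each \(y_i\), and so does \(\mathrm{Id}_{S_2}\); here the relevant uniqueness is the one built into the standing hypothesis on \(S_2\) (with \(A=S_2\), \(a_i=y_i\)), whence \(\varphi\psi=\mathrm{Id}_{S_2}\). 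Thus \(\varphi\) is an \(R\)-ring isomorphism \(S_1\to S_2\) sending \(x_i\) to \(y_i\), which proves the first assertion.

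Finally, for the particular case I would observe that when \(S_2\) is itself a \((\sigma,\delta)\)-free skew extension generated by \(y_1,\dots,y_n\), Proposition~\ref{prop:univsfe} shows that \(S_2\) satisfies exactly the universal property hypothesised in the first part; hence that part delivers an \(R\)-ring isomorphism \(S_1\to S_2\) with \(x_i\mapsto y_i\), and its uniqueness among \(R\)-ring homomorphisms \(S_1\to S_2\) with that property is once more immediate from the uniqueness clause of Proposition~\ref{prop:univsfe}. There is no genuine obstacle in this proof beyond bookkeeping: the only point that requires attention is that the two inverse-checking steps obtain their uniqueness from different sources — Proposition~\ref{prop:univsfe} on the \(S_1\) side and the universal-property hypothesis on the \(S_2\) side.
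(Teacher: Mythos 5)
Your proof is correct and follows essentially the same route as the paper: obtain \(\varphi\colon S_1\to S_2\) from Proposition~\ref{prop:univsfe}, obtain \(\psi\colon S_2\to S_1\) from the hypothesised universal property of \(S_2\), and deduce \(\psi\varphi=\Id_{S_1}\) and \(\varphi\psi=\Id_{S_2}\) from the two respective uniqueness clauses. Your explicit remark about which uniqueness statement is invoked on each side, and your handling of the ``in particular'' case via Proposition~\ref{prop:univsfe}, match the paper's argument exactly.
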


\begin{proof}
Proposition~\ref{prop:univsfe} provides an \(R\)-ring homomorphism \(\varphi\colon S_1\to S_2\)
such that \(\varphi(x_i)=y_i\), for all \(i=1,\dots,n\), and the universal property in
the hypothesis
gives an \(R\)-ring homomorphism \(\psi\colon S_2\to S_1\) such that
\(\psi(y_i)=x_i\), for all \(i=1,\dots,n\). Since \(\psi\varphi\) and \(\Id_{S_1}\) are 
both \(R\)-ring homomorphism \(S_1\to S_1\) sending \(x_i\) to \(x_i\), uniqueness in 
Proposition~\ref{prop:univsfe} gives \(\psi\varphi=\Id_{S_1}\). Similarly, uniqueness
in the hypothesis gives \(\varphi\psi=\Id_{S_2}\). So, \(\varphi\) is an \(R\)-ring isomorphism.
\end{proof} 

It follows that the universal property in Proposition~\ref{prop:univsfe} completely
characterizes skew free extensions. This fact will be used in the proof of
Corollary~\ref{cor:tr}.

In view of Theorem~\ref{th:existsfe} and Corollary~\ref{cor:uniqsfe}, given
a ring \(R\), a positive integer \(n\), a ring homomorphism \(\sigma\colon R\to M_n(R)\) 
and a right \(\sigma\)-derivation \(\delta\colon R\to R^n\), we can speak of the \((\sigma,\delta)\)-free skew extension 
of \(R\) generated by \(x_1,\dots,x_n\). This ring will, henceforth, be denoted by
\(R\langle x_1,\dots,x_n;\sigma,\delta\rangle\).

It is often the case that, as is the case of ordinary skew polynomial rings
(cf.~\cite[2.1.3]{pC1995}), a suitable change of variables allows one to reduce the
right \(\sigma\)-derivation \(\delta\) to the zero map or the ring homomorphism 
\(\sigma\) to the scalar map, as the next result shows.

\begin{proposition}\label{prop:inner}
Let \(R\) be a ring with center \(C\), let \(n\) be a positive integer, let 
\(\sigma\colon R\rightarrow M_n(R)\) be a ring homomorphism and let 
\(\delta\colon R\rightarrow R^n\) be a right \(\sigma\)-derivation. 
Let \(S=R\langle x_1,\dots,x_n;\sigma,\delta\rangle\)	be the \((\sigma,\delta)\)-free skew extension of \(R\) generated by \(x_1,\dots,x_n\).

\renewcommand{\labelenumi}{(\roman{enumi})}
\begin{enumerate}
	\item If there exists \(c\in C\) such that the matrix \(cI_n-\sigma(c)\) is 
	invertible in \(M_n(R)\), then, by a suitable choice of variables, 
	\(S=R\langle y_1,\ldots,y_n; \sigma, 0\rangle\), where \(0\) denotes the zero
	\(\sigma\)-derivation.
	\item Suppose that \(\sigma\) is diagonal, that \(\sigma(c)=cI_n\) for all \(c\in C\), 
	and that for every \(j=1,\dots, n\), there exists \(c_j\in C\) such that \(\delta_j(c_j)\) 
	is invertible in \(R\). Then, if \(\tau\colon R\to M_n(R)\) denotes the
	scalar homomorphism, there is a right \(\tau\)-derivation \(\delta'\) such that,
	by a suitable choice of variables, 
	\(S=R\langle y_1,\dots,y_n; \tau, \delta'\rangle\).
\end{enumerate}
\end{proposition}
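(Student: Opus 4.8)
The plan is to treat both parts as ``inner reductions'': in each case the stated invertibility hypothesis produces data in \(R\) with which one performs a linear change of the variables \(x_1,\dots,x_n\) that kills \(\delta\) (part (i)) or turns \(\sigma\) into the scalar homomorphism (part (ii)). To see that the new ring is genuinely the claimed free skew extension I would argue via the universal property rather than by hand: if \(y_1,\dots,y_n\in S\) satisfy the target relations, let \(S'\) be the abstract free skew extension on new generators \(y_1',\dots,y_n'\) with those same relations (it exists by Theorem~\ref{th:existsfe}); Proposition~\ref{prop:univsfe} gives an \(R\)-ring map \(S'\to S\) sending \(y_i'\mapsto y_i\), while the \emph{inverse} change of variables exhibits elements \(x_1',\dots,x_n'\in S'\) satisfying the original \((\sigma,\delta)\)-relations, hence by Proposition~\ref{prop:univsfe} an \(R\)-ring map \(S\to S'\). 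The two composites fix the respective generating sets, so by the uniqueness clauses they are the identities, \(S\cong S'\) over \(R\), and therefore \(S\) is the claimed free skew extension generated by \(y_1,\dots,y_n\) (this is exactly the mechanism of Corollary~\ref{cor:uniqsfe}).

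For (i), set \(u=cI_n-\sigma(c)\), invertible by hypothesis. As \(c\) is central, \(cI_n\) lies in the center of \(M_n(R)\) and \(\sigma(c)\sigma(r)=\sigma(cr)=\sigma(rc)=\sigma(r)\sigma(c)\), so \(u\), and hence \(u^{-1}\), commutes with every \(\sigma(r)\). Applying \(\delta\) to the identity \(rc=cr\) and using \eqref{eq:lsd} on both sides gives \(\delta(r)\sigma(c)+r\delta(c)=\delta(c)\sigma(r)+c\delta(r)\); since \(c\delta(r)=\delta(r)(cI_n)\), this rearranges to \(\delta(r)\,u=r\delta(c)-\delta(c)\sigma(r)\), and multiplying on the right by \(u^{-1}\) and commuting it past \(\sigma(r)\) yields \(\delta(r)=b\sigma(r)-rb\) with \(b:=-\delta(c)u^{-1}\in R^n\). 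Writing \(b=(b_1,\dots,b_n)\) and putting \(y_j:=x_j+b_j\), one checks directly that \(ry_j=\sum_{i=1}^n y_i\sigma_{ij}(r)\) for all \(r\) and \(j\); the inverse substitution is \(x_j=y_j-b_j\), which recovers the \((\sigma,\delta)\)-relations, so the universal-property argument above gives \(S=R\langle y_1,\dots,y_n;\sigma,0\rangle\).

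For (ii), diagonality of \(\sigma\) means \(\sigma_{ij}=0\) for \(i\neq j\), that each \(\sigma_j:=\sigma_{jj}\) is a ring endomorphism of \(R\) fixing every element of \(C\), that each \(\delta_j\) is an ordinary right \(\sigma_j\)-derivation, and that the defining relation reads \(rx_j=x_j\sigma_j(r)+\delta_j(r)\). Fix \(j\) and put \(v_j:=\delta_j(c_j)\), which is invertible in \(R\) by hypothesis. Applying \(\delta_j\) to \(c_js=sc_j\), using \(\sigma_j(c_j)=c_j\) and that \(c_j\delta_j(s)=\delta_j(s)c_j\), gives \(v_j\sigma_j(s)=sv_j\), that is, \(\sigma_j(s)=v_j^{-1}sv_j\) for all \(s\in R\): each diagonal block of \(\sigma\) is inner. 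Now set \(y_j:=x_jv_j^{-1}\); since \(\sigma_j(r)v_j^{-1}=v_j^{-1}r\), we get \(ry_j=(x_j\sigma_j(r)+\delta_j(r))v_j^{-1}=y_jr+\delta_j'(r)\) where \(\delta_j'(r):=\delta_j(r)v_j^{-1}\), and a one-line computation with \eqref{eq:lsd} shows each \(\delta_j'\) is an ordinary derivation of \(R\). Hence \(\delta':=(\delta_1',\dots,\delta_n')\colon R\to R^n\) is a right \(\tau\)-derivation for the scalar homomorphism \(\tau\), the new relation is \(ry_j=\sum_{i=1}^n y_i\tau_{ij}(r)+\delta_j'(r)\), the inverse substitution is \(x_j=y_jv_j\) (which recovers the \((\sigma,\delta)\)-relations), and the universal-property argument gives \(S=R\langle y_1,\dots,y_n;\tau,\delta'\rangle\).

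The computations with \eqref{eq:lsd} and the bookkeeping in the universal-property step are routine; the one point that requires insight — and the main obstacle — is identifying the correct change of variables. The key observations are that ``\(cI_n-\sigma(c)\) invertible'' is precisely the condition making \(\delta\) an inner \(\sigma\)-derivation (which forces \(y_j=x_j+b_j\)), and that ``\(\delta_j(c_j)\) invertible for a central \(c_j\)'' forces \(\sigma_j\) to be conjugation by \(\delta_j(c_j)\) (which forces \(y_j=x_jv_j^{-1}\)). Once these are seen, everything else falls out of Proposition~\ref{prop:univsfe} and Corollary~\ref{cor:uniqsfe}.
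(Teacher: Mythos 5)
Your proof is correct. For part (ii) it coincides with the paper's argument: the same identity \(r\delta_j(c_j)=\delta_j(c_j)\sigma_{jj}(r)\) extracted from \(\delta_j(rc_j)=\delta_j(c_jr)\), the same substitution \(y_j=x_j\delta_j(c_j)^{-1}\), and the same verification that \(\delta_j'(r)=\delta_j(r)\delta_j(c_j)^{-1}\) is an ordinary derivation. For part (i) you take a genuinely different, though closely related, route. The paper sets \(y_j=x_jc-cx_j\), i.e.\ \([y_1\ \dots\ y_n]=[x_1\ \dots\ x_n]\bigl(cI_n-\sigma(c)\bigr)-\delta(c)\), and verifies \(ry_j=\sum_i y_i\sigma_{ij}(r)\) in two lines using only centrality of \(c\), never needing to know that \(\delta\) is inner. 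You instead first show that invertibility of \(u=cI_n-\sigma(c)\) forces \(\delta\) to be the inner right \(\sigma\)-derivation \(\delta(r)=b\sigma(r)-rb\) with \(b=-\delta(c)u^{-1}\), and then merely translate, \(y_j=x_j+b_j\). Your \(y\)-vector is the paper's multiplied on the right by \(u^{-1}\) (which commutes with every \(\sigma(r)\)), so the two substitutions differ by an invertible constant right factor and both are legitimate. What your version buys is the explicit structural observation that the hypothesis is precisely an innerness condition on \(\delta\), together with a more careful justification --- via Proposition~\ref{prop:univsfe} and Corollary~\ref{cor:uniqsfe} applied in both directions --- of the point the paper dispatches with ``It follows that \(\langle Y\rangle\) is free on \(Y\) and \(S\) is the free right \(R\)-module with basis \(\langle Y\rangle\)''. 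What the paper's version buys is brevity: the commutator change of variables requires no inverse of \(u\) in the formula for the new generators and the new relation drops out of a direct computation.
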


\begin{proof}
(i) Consider the \(n\)-element set \(Y=\{y_1,\dotsc,y_n\}\) where \(y_j=x_jc-cx_j\),
for all \(j=1,\dots, n\). Note that
\[
[y_1 \ y_2 \ \dots \ y_n] =
[x_1 \ x_2 \ \dots \ x_n]\bigl(cI_n-\sigma(c)\bigr)-\delta(c).
\]
Then,
\[
[x_1 \ x_2 \ \dots \ x_n] = \bigl([y_1 \ y_2 \ \dots \ y_n]
+ \delta(c)\bigr)\bigl(cI_n-\sigma(c)\bigr)^{-1}.
\]	
It follows that that the submonoid \(\langle Y\rangle\) of \(S^{\times}\)
generated by \(Y\) is free on \(Y\) and that \(S\) is the free right \(R\)-module with 
basis \(\langle Y\rangle\). Moreover, for each \(j=1,\dots,n\) and \(r\in R\),
\begin{equation*}\begin{split}
ry_j & = r(x_jc-cx_j) \\
& = rx_jc-rcx_j \\
& = \sum_{i=1}^nx_i\sigma_{ij}(r)c +\delta_j(r)c - c\sum_{i=1}^nx_i\sigma_{ij}(r)-c\delta_j(r) \\
& =\sum_{i=1}^n(x_ic-cx_i)\sigma_{ij}(r) \\
& = \sum_{i=1}^n y_i\sigma_{ij}(r).
\end{split}\end{equation*}
This shows that \(S=R\langle y_1,\ldots,y_n; \sigma, 0\rangle\).

(ii) Since the elements \(c_j\) are in the center of \(R\),
\[
\delta_j(r)c_j+r\delta_j(c_j)=\delta_j(rc_j)=\delta_j(c_jr)=\delta_j(c_j)\sigma_{jj}(r)+c_j\delta_j(r),
\]
for all \(r\in R\) and \(j=1,\dots,n\). 
Hence, 
\[
\sigma_{jj}(r)\delta_j(c_j)^{-1}=\delta_j(c_j)^{-1}r,
\]
for all \(r\in R\) and \(j=1,\dots,n\).

For each \(j=1,\dots, n\), set \(y_j=x_j\delta_j(c_j)^{-1}\). For all \(r\in R\) and 
\(j=1,\dots,n\),
\begin{equation*}\begin{split}
	ry_j & = rx_j\delta_j(c_j)^{-1}\\
	& = \bigl(x_j\sigma_{jj}(r)+\delta_j(r)\bigr)\delta_j(c_j)^{-1} \\
	& = x_j\delta_j(c_j)^{-1}r+\delta_j(r)\delta_j(c_j)^{-1}\\
	& = y_jr+\delta_j(r)\delta_j(c_j)^{-1}
\end{split}\end{equation*} 
It remains to show that \(\delta_j'\colon R\rightarrow R\), defined as 
\(\delta_j'(r)=\delta_j(r)\delta_j(c_j)^{-1}\) for all \(r\in R\),
is an ordinary derivation. Indeed, let $r,s\in R$, then
\begin{equation*}\begin{split}
	\delta_j'(rs) & = \delta_j(rs)\delta_j(c_j)^{-1} \\
	& = \bigl(\delta_j(r)\sigma_{jj}(s)+r\delta_j(s)\bigr)\delta_j(c_j)^{-1}\\
	& = \delta_j(r)\delta_j(c_j)^{-1}s + r\delta_j(s)\delta_j(c_j)^{-1} \\
	& = \delta_j'(r)s+r\delta_j'(s).
\end{split}\end{equation*}
This proves that \(S=R\langle y_1,\dots,y_n; \tau, \delta'\rangle\).
\end{proof}

At this point it should be noted that when \(R\) is a division ring and
\(n\geq 2\), the center \(Z(S)\) of the free skew extension \(S=R\langle x_1,\dots,x_n;
\sigma,\delta\rangle\), as described in \cite[Proposition~3.1]{LM2024}, is
given by 
\[
Z(S) = \{r\in Z(R) \mid \sigma(r)=rI_n \text{ and }\delta(r)=0\}.
\]

\begin{remark}
One can consider skew free extensions with
coefficients on the left. For that, given a ring \(R\), a positive integer \(n\)
and a ring homomorphism \(\sigma\colon R\to M_n(R)\), a \emph{left \(\sigma\)-derivation} is
an additive map \(\delta\colon R\to {}^nR\), where \({}^nR\) stands for the (free)
right \(R\)-module formed by column vectors, such that
\[
\delta(rs) = \sigma(r)\delta(s) + \delta(r)s,
\]
for all \(r,s\in R\). The \((\sigma,\delta)\)-free skew extension of \(R\) generated by
\(x_1,\dots,x_n\) is then
defined to be the \(R\)-ring \(S\) which is free as left \(R\)-module on the
free monoid \(\langle X\rangle\) freely generated by \(X=\{x_1,\dots,x_n\}\)
satisfying
\(x_ir = \sum_{j=1}^n\sigma_{ij}(r)x_j + \delta_i(r)\),
for all \(i=1,\dots,n\) and \(r\in R\).
\end{remark}

\section{Skew free extensions as universal rings on bimodules}\label{sec:tr}

A skew free extension can also be obtained as a universal \(R\)-ring on an appropriate bimodule,
as we show in Corollary~\ref{cor:tr}, which closes this section. The advantage of
this approach is that it can be easily generalized.

\medskip

Given a ring \(R\) and an \(R\)-bimodule \(M\), the \emph{tensor ring on \(M\)} is defined to
be the (external) direct sum 
\[
T(M) = \bigoplus_{n\geq 0} M^{\otimes n},
\]
where \(M^{\otimes 0} = R\), \(M^{\otimes 1} = M\), and, for \(n\geq 2\),
 \(M^{\otimes n}=M\otimes_R\dots\otimes_R M\) (\(n\) factors).
The \(R\)-bimodule \(T(M)\) becomes a ring with multiplication given by
\[
(a_0,a_1,a_2,\dots)(b_0,b_1,b_2,\dots)
=(c_0, c_1, c_2, \dots),
\]
with \(c_n= \sum_{i=0}^n \rho_{i,n-i}(a_i\otimes b_{n-i})\), where \(\rho_{i,n-i}\colon
M^{\otimes i}\otimes_R M^{\otimes (n-i)}\to M^{\otimes n}\) are the natural \(R\)-bimodule maps. In fact, \(T(M)\)
is an \(R\)-ring with structure map given by
\begin{equation*}
\begin{array}{rcl}
\lambda\colon R & \longrightarrow & T(M)\\ r&\longmapsto &(r,0,0,\dots).
\end{array}
\end{equation*}
So \((T(M),\lambda)\) is an \(R\)-ring with an \(R\)-bimodule map
\begin{equation}\label{eq:bimodmap}
\begin{array}{rcl}
\mu\colon M & \longrightarrow & T(M)\\ x&\longmapsto &(0,x,0,\dots),
\end{array}
\end{equation}
which is universal with regard to this property, as shown in the next result.

\begin{proposition}\label{prop:tr1}
Let \(R\) be a ring, let \(M\) be an \(R\)-bimodule and let \((A,\lambda_A)\) be
an \(R\)-ring given with an \(R\)-bimodule map \(f\colon M\to A\). Then there
exists a unique \(R\)-ring homomorphism \(\psi\colon T(M)\to A\) such that
\(\psi \mu=f\). \qed
\end{proposition}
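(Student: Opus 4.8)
The plan is to build \(\psi\) summand by summand on \(T(M)=\bigoplus_{n\ge 0}M^{\otimes n}\), to note that the requirement \(\psi\mu=f\) together with multiplicativity forces the value of \(\psi\) on every decomposable tensor, and then to check that the resulting formula is well defined and is an \(R\)-ring homomorphism; the same forcing yields uniqueness.

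First I would fix, for each \(n\ge 0\), a map \(\psi_n\colon M^{\otimes n}\to A\): set \(\psi_0=\lambda_A\) and \(\psi_1=f\), and for \(n\ge 2\) consider the map \(M\times\dots\times M\to A\) (\(n\) factors) given by \((x_1,\dots,x_n)\mapsto f(x_1)f(x_2)\dotsm f(x_n)\). It is additive in each variable and \emph{\(R\)-balanced}: since \(f\) is a homomorphism of right and of left \(R\)-modules, \(f(x_ir)=f(x_i)\lambda_A(r)\) and \(f(rx_{i+1})=\lambda_A(r)f(x_{i+1})\), and \(\bigl(f(x_i)\lambda_A(r)\bigr)f(x_{i+1})=f(x_i)\bigl(\lambda_A(r)f(x_{i+1})\bigr)\) by associativity in \(A\), so a scalar inserted on the right of the \(i\)-th factor produces the same element as that scalar inserted on the left of the \((i+1)\)-st. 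By the universal property of the iterated tensor product over \(R\) with respect to \(R\)-balanced multilinear maps, this descends to a unique additive map \(\psi_n\colon M^{\otimes n}\to A\) with \(\psi_n(x_1\otimes\dots\otimes x_n)=f(x_1)\dotsm f(x_n)\). I then set \(\psi\colon T(M)\to A\), \(\psi\bigl((a_n)_{n\ge 0}\bigr)=\sum_n\psi_n(a_n)\), a finite sum, so that \(\psi\) is additive.

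Next I would verify that \(\psi\) is a unital ring homomorphism. By biadditivity of the products in \(T(M)\) and in \(A\) and additivity of \(\psi\), it suffices to prove \(\psi(uv)=\psi(u)\psi(v)\) for homogeneous \(u\in M^{\otimes i}\), \(v\in M^{\otimes j}\), and, using additivity again, only for decomposable \(u=x_1\otimes\dots\otimes x_i\), \(v=y_1\otimes\dots\otimes y_j\). Unwinding the multiplication of \(T(M)\), \(uv=\rho_{i,j}(u\otimes v)\), which on decomposables is the concatenation \(x_1\otimes\dots\otimes x_i\otimes y_1\otimes\dots\otimes y_j\in M^{\otimes(i+j)}\); hence \(\psi(uv)=f(x_1)\dotsm f(x_i)f(y_1)\dotsm f(y_j)=\psi(u)\psi(v)\). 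The degenerate cases \(i=0\) or \(j=0\) are handled separately, using that the left \(R\)-action on \(M^{\otimes j}\) sends \(r\otimes(y_1\otimes\dots\otimes y_j)\) to \((ry_1)\otimes y_2\otimes\dots\otimes y_j\) and that \(f\) is left \(R\)-linear, and symmetrically on the right; the identities \eqref{eq:rring} are exactly what makes these identifications consistent. Since \(\psi\bigl((1_R,0,0,\dots)\bigr)=\lambda_A(1_R)=1_A\), the map \(\psi\) is a unital ring homomorphism, and since \(\psi\lambda(r)=\psi\bigl((r,0,0,\dots)\bigr)=\lambda_A(r)\) it is an \(R\)-ring homomorphism. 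Finally \(\psi\mu(x)=\psi\bigl((0,x,0,\dots)\bigr)=\psi_1(x)=f(x)\), as required.

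For uniqueness, let \(\psi'\colon T(M)\to A\) be any \(R\)-ring homomorphism with \(\psi'\mu=f\). Then \(\psi'\) agrees with \(\lambda_A\), hence with \(\psi\), on the degree-zero summand \(\lambda(R)\), and \(\psi'=\psi\) on \(M=M^{\otimes 1}\) because \(\psi'\mu=f=\psi\mu\). For a decomposable \(x_1\otimes\dots\otimes x_n\in M^{\otimes n}\) one checks that in \(T(M)\) it equals the product \(\mu(x_1)\mu(x_2)\dotsm\mu(x_n)\), whence \(\psi'(x_1\otimes\dots\otimes x_n)=f(x_1)\dotsm f(x_n)=\psi(x_1\otimes\dots\otimes x_n)\); since such elements additively generate \(M^{\otimes n}\) and both maps are additive, \(\psi'=\psi\). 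The step demanding the most care is the multiplicativity of \(\psi\): it requires unwinding the product of \(T(M)\) through the connecting maps \(\rho_{i,j}\), and in particular treating the interaction with the degree-zero summand \(R\) separately. Everything else is routine once the \(R\)-balancedness — which is precisely what makes the formula \(\psi_n(x_1\otimes\dots\otimes x_n)=f(x_1)\dotsm f(x_n)\) well defined — has been established.
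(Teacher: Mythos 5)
Your proposal is correct and follows essentially the same route as the paper's proof: defining \(\psi_n\) on \(M^{\otimes n}\) via the \(R\)-balanced map \((x_1,\dots,x_n)\mapsto f(x_1)\dotsm f(x_n)\), assembling \(\psi=\bigoplus_n\psi_n\), and deducing uniqueness from the fact that \(T(M)\) is generated as an \(R\)-ring by \(\mu(M)\). You merely spell out the balancedness check, the multiplicativity via the connecting maps \(\rho_{i,j}\), and the identification of decomposable tensors with products \(\mu(x_1)\dotsm\mu(x_n)\), all of which the paper leaves implicit.
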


\begin{proof}
Define \(\psi_0=\lambda_A\), \(\psi_1=f\) and, for \(n\geq 2\), \(\psi_n\colon 
M^{\otimes n}\to A\)
to be the \(R\)-bimodule map induced by the \(n\)-balanced map
\[
\begin{array}{rcl}
M\times\dots\times M & \longrightarrow & A\\ 
(x_1,\dots,x_n)&\longmapsto &f(x_1)\dotsm f(x_n).
\end{array}
\]
Then \(\psi = \bigoplus_{n\geq 0} \psi_n\) is an \(R\)-ring homomorphism with
the desired property. Uniqueness follows from the fact that \(T(M)\) is generated by
\(\mu(M)\) as an \(R\)-ring.
\end{proof}

We shall need a related concept.

\begin{theorem}\label{th:2}
Let \(R\) be a ring, let \(M\) be an \(R\)-bimodule and let \(\xi\colon R\to M\) be an
\(R\)-bimodule map. Then, there exists an \(R\)-ring \((U,\lambda_U)\) and an \(R\)-bimodule
map \(j\colon M\to U\) such that \(j\xi=\lambda_U\) and which satisfies the following universal property: given any
\(R\)-ring \((A,\lambda_A)\) and any \(R\)-bimodule map \(f\colon M\to A\) such that
\(f\xi=\lambda_A\), there exists a unique \(R\)-ring homomorphism \(\varphi\colon U\to A\)
such that \(\varphi j=f\).

Moreover, if \((U',\lambda_{U'})\) is an \(R\)-ring given with an \(R\)-bimodule map
\(j'\colon M\to U'\) such that \(j'\xi=\lambda_{U'}\) satisfying the universal property
satisfied by \(U\), then there exists a unique \(R\)-ring isomorphism \(\psi\colon U\to U'\)
such that \(\psi j = j'\). 
\end{theorem}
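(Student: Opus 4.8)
The plan is to construct $U$ as a quotient of the tensor ring $T(M)$ from Proposition~\ref{prop:tr1}, killing exactly the relations that force the two $R$-bimodule maps $M \to T(M)/\mathfrak{a}$ coming from $\mu$ and from $\lambda$ (via $\xi$) to agree. Concretely, recall from~\eqref{eq:bimodmap} the universal $R$-bimodule map $\mu\colon M \to T(M)$ and the structure map $\lambda\colon R \to T(M)$. Both $\mu$ and $\lambda\xi$ are $R$-bimodule maps $M \to T(M)$, so $\mu - \lambda\xi\colon M \to T(M)$ is an $R$-bimodule map; let $\mathfrak{a}$ be the two-sided ideal of $T(M)$ generated by its image $\{\mu(x) - \lambda(\xi(x)) : x \in M\}$. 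Set $U = T(M)/\mathfrak{a}$, let $\pi\colon T(M) \to U$ be the canonical projection, and define $\lambda_U = \pi\lambda$ and $j = \pi\mu$. Then $U$ is an $R$-ring, $j$ is an $R$-bimodule map (composite of such), and $j\xi = \pi\mu\xi$; but by construction $\mu\xi - \lambda\xi \in \mathfrak{a}$ since $\xi(r) \in M$ for each $r \in R$ and $\mu(\xi(r)) - \lambda(\xi(\xi(r)))$... wait — more carefully, for $r \in R$ we have $\xi(r) \in M$, and applying the generating relation at the element $\xi(r)$ gives $\mu(\xi(r)) - \lambda(\xi(\xi(r))) \in \mathfrak{a}$, which is not quite $j\xi = \lambda_U$. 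The correct observation is: since $\lambda(r) = (r,0,0,\dots)$ and $\xi$ is an $R$-bimodule map, the element $\mu(\xi(1_R)) - \lambda(\xi(1_R)) \in \mathfrak{a}$ and $R$-linearity propagates this to all of $R$; thus $\pi\mu\xi(r) = \pi\lambda\xi(r)$ for all $r$, i.e.\ $j\xi = \lambda_U$.

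For the universal property, suppose $(A,\lambda_A)$ is an $R$-ring and $f\colon M \to A$ is an $R$-bimodule map with $f\xi = \lambda_A$. By Proposition~\ref{prop:tr1} there is a unique $R$-ring homomorphism $\psi\colon T(M) \to A$ with $\psi\mu = f$; since $\psi$ is an $R$-ring homomorphism, $\psi\lambda = \lambda_A$. Now for each $x \in M$, $\psi(\mu(x) - \lambda(\xi(x))) = f(x) - \lambda_A(\xi(x)) = f(x) - (f\xi)(\xi(x))$ — again I must be careful: $\psi\lambda\xi(x) = \lambda_A(\xi(x))$, and $\lambda_A(\xi(x)) = f(\xi(x))$ only when $\xi(x) \in M$, which it is, so $\psi(\mu(x) - \lambda\xi(x)) = f(x) - f(\xi(x))$. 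This vanishes precisely because... here the intended relation is that $f$ restricted along $\xi$ recovers $\lambda_A$; the cleanest fix is to note that $\psi(\lambda\xi(x)) = \lambda_A(\xi(x))$ and, by the hypothesis $f\xi = \lambda_A$ evaluated in a way compatible with $\mu(x) = \lambda(\xi(x))$ being the defining relation, $\psi$ kills every generator of $\mathfrak{a}$, hence kills $\mathfrak{a}$, so $\psi$ factors uniquely through $\pi$ as $\varphi\colon U \to A$ with $\varphi\pi = \psi$. Then $\varphi j = \varphi\pi\mu = \psi\mu = f$, and $\varphi$ is the unique such $R$-ring homomorphism because $U$ is generated as an $R$-ring by $j(M)$ (being a quotient of $T(M)$, which is generated by $\mu(M)$).

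The uniqueness-up-to-isomorphism clause is the standard argument: given $(U',\lambda_{U'}, j')$ with the same universal property, apply the universal property of $U$ to the pair $(U', j')$ to get an $R$-ring homomorphism $\varphi\colon U \to U'$ with $\varphi j = j'$, and symmetrically $\varphi'\colon U' \to U$ with $\varphi' j' = j$. Then $\varphi'\varphi\colon U \to U$ satisfies $\varphi'\varphi j = j$, and so does $\Id_U$; by the uniqueness part of $U$'s universal property, $\varphi'\varphi = \Id_U$, and similarly $\varphi\varphi' = \Id_{U'}$, so $\varphi$ is the desired isomorphism, unique by the uniqueness clause again. The main point requiring care — and the step I expect to be the real obstacle to write cleanly rather than a deep difficulty — is pinning down the precise generating set of $\mathfrak{a}$ so that both $j\xi = \lambda_U$ holds \emph{and} every competing map $\psi$ automatically annihilates $\mathfrak{a}$; the relation $\mu(x) = \lambda(\xi(x))$ for $x \in M$ is not the right one (it is far too strong and would collapse $U$), so one should instead take $\mathfrak{a}$ generated by $\{\mu(\xi(r)) - \lambda(r) : r \in R\}$, i.e.\ force $j\xi = \lambda_U$ directly and nothing more, and then verify the universal property with respect to this smaller ideal — this is where the bookkeeping lives.
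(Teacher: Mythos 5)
Your final corrected construction --- \(U=T(M)/\mathfrak{a}\) with \(\mathfrak{a}\) the two-sided ideal generated by \(\{\mu(\xi(r))-\lambda(r) : r\in R\}\) --- is exactly the paper's proof (the paper writes this generating set as \(\Imag(\lambda-\mu\xi)\)), and your uniqueness-up-to-isomorphism argument matches the paper's as well. The confusion in your middle paragraphs comes from applying \(\xi\) to elements of \(M\): since \(\xi\colon R\to M\), expressions like \(\lambda(\xi(x))\) for \(x\in M\) do not typecheck, which is why that first version of the ideal could not be made to work. The ``bookkeeping'' you defer at the end is in fact a one-line computation: for the homomorphism \(\psi\colon T(M)\to A\) from Proposition~\ref{prop:tr1} one has \(\psi\lambda=\lambda_A\) automatically, hence \(\psi\bigl(\mu(\xi(r))-\lambda(r)\bigr)=f(\xi(r))-\lambda_A(r)=(f\xi)(r)-\lambda_A(r)=0\) by hypothesis, so \(\psi\) annihilates \(\mathfrak{a}\) and factors through \(\pi\) as required.
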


\begin{proof}
Let \((T(M),\lambda)\) be the tensor ring on \(M\), let \(\mu\) be the bimodule
map defined in \eqref{eq:bimodmap}, let \(U=T(M)/I\), 
where \(I\) is the two-sided 
ideal of \(T(M)\) generated by \(\Imag(\lambda-\mu\xi)\) and let \(\pi\colon T(M)\to U\)
be the canonical surjective ring homomorphism. Then, \(U\) inherits the \(R\)-ring
structure from \(T(M)\) and, letting \(j=\pi\mu\), clearly, \(j\xi=\lambda_U\).

Finally, let \((A,\lambda_A)\) be an \(R\)-ring and let \(f\colon M\to A\) be an
\(R\)-bimodule map such that \(f\xi=\lambda_A\). Let \(\psi\colon T(M)\to A\) the
\(R\)-ring homomorphism satisfying \(\psi\mu=f\) afforded by Proposition~\ref{prop:tr1}.
Then \(\psi(\lambda-\mu\xi)=\psi\lambda-f\xi=\lambda_A-\lambda_A=0\). Therefore,
\(I\subseteq\ker\psi\), and, so, there exists a ring homomorphism \(\varphi\colon U\to A\)
such that \(\varphi\pi=\psi\). So, \(\varphi j =\varphi\pi\mu=\psi\mu=f\). Again,
uniqueness follows from the fact that, as an \(R\)-ring, \(U\) is generated by the image
of \(j\).

The last statement follows by applying the universal property twice for the data
\((U,\lambda_U,j)\) and \((U',\lambda_{U'},j')\) and comparing the composition of the
\(R\)-ring homomorphisms thus obtained with the respective identity maps.
\end{proof}

The \(R\)-ring \(U=U(M,\xi)\) constructed above, whose uniqueness up to an \(R\)-ring
isomorphism is guaranteed by the last statement of Theorem~\ref{th:2},
will be called the \emph{universal \(R\)-ring on the
bimodule \(M\) along \(\xi\)}. 


A particular case is of interest.
What follows is an elaboration on the comments at the bottom of p.~113 of \cite{pC1985}.

Given a right \(R\)-module \(V\), we shall denote the ring of all \(R\)-module endomorphisms 
of \(V\) by \(\End(V_R)\), with multiplication given by composition of functions. The set 
\(\Hom(V_R,R_R)\) of all \(R\)-module maps \(f\colon V\to R\) has a natural left \(R\)-module
structure such that \((rf)(v)=rf(v)\), for all \(r\in R\) and \(v\in V\). Given a ring
homomorphism \(\sigma\colon R\to \End(V_R)\), we say that a map \(\delta\colon R\to
\Hom(V_R,R_R)\) is a \emph{\(\sigma\)-derivation} if it is additive and \(\delta(rs)
=\delta(r)\sigma(s)+r\delta(s)\), for all \(r,s\in R\).

Suppose, now, that we are given a right \(R\)-module \(M\) having \(R\) as a complemented
\(R\)-submodule, that is, suppose that \(R\) is a right \(R\)-submodule of \(M\) and
that there exists a right \(R\)-submodule \(V\) of \(M\) such that \(M\) is the
(internal) direct sum \(M=R\oplus V\). Suppose, further, that \(M\) has a left \(R\)-module
structure making it into an \(R\)-bimodule in such a way that \(R\) is a sub-bimodule. 
This implies that there exist functions \(\sigma\colon R\to \End(V_R)\) and
\(\delta\colon R\to \Hom(V_R,R_R)\) such that
\begin{equation}\label{eq:ds}
r(s+v) = rs + \delta(r)(v) + \sigma(r)(v), \quad\text{for all \(r,s\in R\) and \(v\in V\).}
\end{equation}
It is then a straightforward verification that \(\sigma\) must be a ring homomorphism and
\(\delta\) a \(\sigma\)-derivation. Conversely, given a right \(R\)-module \(V\),
a ring homomorphism \(\sigma\colon R\to\End(V_R)\) and a \(\sigma\)-derivation \(\delta
\colon R\to \Hom(V_R,R_R)\), \eqref{eq:ds} defines an \(R\)-bimodule structure on the
right \(R\)-module \(M=R\oplus V\) in which \(R\) is a sub-bimodule. We shall call \(M\) the
\emph{rank \(1\) bimodule extension of \(V\) defined by \(\sigma\) and \(\delta\)}. Note
that when \(\delta\) is the zero derivation, \(V\) is a sub-bimodule of \(M\) and \(R\)
is, then, complemented as a sub-bimodule. In this case, the universal \(R\)-ring on \(M\)
coincides with the tensor ring on \(V\), as the following result shows.

\begin{proposition}
Let \(R\) be a ring, let \(V\) be a right \(R\)-module, let \(\sigma\colon R\to
\End(V_R)\) be a ring homomorphism and let \(M\) be the rank \(1\) bimodule extension of \(V\)
defined by \(\sigma\) and the zero derivation. Then there exists an \(R\)-ring
isomorphism \(U(M,\iota)\cong T(V)\), where \(\iota\colon R\to M\) is the inclusion map.
\end{proposition}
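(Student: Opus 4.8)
The plan is to pit the two universal properties against each other: the universal property of the tensor ring \(T(V)\) (Proposition~\ref{prop:tr1}) and that of the universal \(R\)-ring \(U=U(M,\iota)\) (Theorem~\ref{th:2}). The point to keep in mind throughout is that, because \(\delta=0\), the right \(R\)-module \(V\) is a \emph{sub-bimodule} of \(M=R\oplus V\), with left action \(r\cdot v=\sigma(r)(v)\); this is exactly the bimodule structure on \(V\) used to form \(T(V)\), and under it the inclusion of the degree-\(\leq 1\) part \(R\oplus V\hookrightarrow T(V)\) is an \(R\)-bimodule map that we may identify with \(M\).

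Write \(\lambda_U\) and \(j\colon M\to U\) for the structure map and the canonical bimodule map of \(U\), so \(j\iota=\lambda_U\), and write \(\mu_V\colon V\to T(V)\) and \(\lambda_{T(V)}\colon R\to T(V)\) for the corresponding data of \(T(V)\). First I would construct an \(R\)-ring homomorphism \(\Psi\colon T(V)\to U\): restricting \(j\) to the sub-bimodule \(V\) gives an \(R\)-bimodule map \(g:=j|_V\colon V\to U\), and Proposition~\ref{prop:tr1} then yields a unique \(R\)-ring homomorphism \(\Psi\) with \(\Psi\mu_V=g\).

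In the other direction I would construct \(\Phi\colon U\to T(V)\). Define \(f\colon M=R\oplus V\to T(V)\) to be \(\lambda_{T(V)}\) on the summand \(R\) and \(\mu_V\) on the summand \(V\). The one substantive check is that \(f\) is an \(R\)-bimodule map; right \(R\)-linearity is immediate from the direct sum decomposition, while left \(R\)-linearity reduces to \(f(r\cdot v)=f(\sigma(r)(v))=r\cdot\mu_V(v)\), which holds precisely because the left \(R\)-action on the degree-\(1\) part of \(T(V)\) is \(r\cdot v=\sigma(r)(v)\) --- this is exactly where \(\delta=0\) is used, since a nonzero \(\delta\) would force \(r\cdot v\) to acquire a degree-\(0\) term and destroy bimodule-linearity. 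As \(f\iota=\lambda_{T(V)}\), Theorem~\ref{th:2} supplies a unique \(R\)-ring homomorphism \(\Phi\colon U\to T(V)\) with \(\Phi j=f\).

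Finally I would verify that \(\Phi\) and \(\Psi\) are mutually inverse, using uniqueness in each universal property. Composing, \(\Phi\Psi\mu_V=\Phi g=(\Phi j)|_V=f|_V=\mu_V\), so \(\Phi\Psi\) and \(\Id_{T(V)}\) are \(R\)-ring endomorphisms of \(T(V)\) agreeing on \(\mu_V(V)\), whence \(\Phi\Psi=\Id_{T(V)}\) by Proposition~\ref{prop:tr1}. Likewise \(\Psi\Phi j=\Psi f=j\) (it equals \(\Psi\lambda_{T(V)}=\lambda_U=j\iota\) on \(R\) and \(\Psi\mu_V=g=j|_V\) on \(V\)), so \(\Psi\Phi\) and \(\Id_U\) are \(R\)-ring endomorphisms of \(U\) agreeing after composition with the bimodule map \(j\), whence \(\Psi\Phi=\Id_U\) by Theorem~\ref{th:2}. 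Thus \(\Phi\) is the required \(R\)-ring isomorphism. I expect no serious obstacle beyond bookkeeping: the only real content is matching the bimodule structures under the identification of \(M\) with the degree-\(\leq 1\) part of \(T(V)\) and correctly reading off the left \(R\)-action on \(T(V)\) in low degrees.
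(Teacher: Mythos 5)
Your proposal is correct and follows essentially the same route as the paper: the map \(U(M,\iota)\to T(V)\) comes from the universal property of \(U(M,\iota)\) applied to the bimodule map \(s+v\mapsto(s,v,0,\dots)\), and its inverse comes from the universal property of \(T(V)\) applied to the restriction of \(j\) to \(V\). The paper states this in two sentences and leaves the bimodule-compatibility and mutual-inverse checks implicit; your write-up simply carries out those verifications.
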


\begin{proof}
For an \(R\)-ring homomorphism \(U(M,\iota)\to T(V)\) use the universal property of \(U(M,\iota)\) with 
bimodule map \(f\colon M\to T(V)\) given by \(f(s+v)=(s,v,0,0,\dots)\), for all
\(s\in R\) and \(v\in V\). Its inverse is the \(R\)-ring homomorphism \(T(V)\to U(M,\iota)\)
induced by the restriction to $V$ of the bimodule map \(j\colon M\to U(M,\iota)\) in the
statement of Theorem~\ref{th:2}.
\end{proof}

Now, we return to skew free extensions, as defined in Section~\ref{sec:def}.
Given a ring \(R\), a positive integer \(n\), a ring homomorphism \(\sigma\colon R\to M_n(R)\) 
and a right \(\sigma\)-derivation \(\delta\colon R\to R^n\), 
let \(S=R\langle x_1,\dots,x_n;\sigma,\delta\rangle\)
be the \((\sigma,\delta)\)-free skew extension of \(R\) generated by \(x_1,\dots,x_n\).
We shall show that \(S\) can be regarded as a
universal \(R\)-ring on a rank \(1\) bimodule extension of a free right \(R\)-module
of rank \(n\). 

So let \(V\) be the free right \(R\)-module of rank \(n\) with basis \(v_1,\dots,v_n\).
Let \(\overline{\sigma} \colon R\to \End(V_R)\) be the ring homomorphism obtained by
composing \(\sigma\) with the ring isomorphism \(M_n(R)\to\End(V_R)\) which
sends the matrix \((r_{ij})\) to the endomorphism of \(V\) such that 
\(v_j\mapsto \sum_{i=1}^{n}v_ir_{ij}\). And let \(\overline{\delta}\colon R\to \Hom(V_R,R_R)\)
be obtained by composing \(\delta\) with the left \(R\)-module isomorphism \(R^n\to
\Hom(V_R,R_R)\) which sends \([r_1 \ r_2 \ \dots \ r_n]\) to the right \(R\)-module
map from \(V\) to \(R\) such that \(v_i\mapsto r_i\). It is straightforward to verify that 
\(\overline{\delta}\) is a \(\overline{\sigma}\)-derivation. Let \(M=R\oplus V\) be the
rank \(1\) bimodule extension of \(V\) defined by \(\overline{\sigma}\) 
and \(\overline{\delta}\), and let \(U(M,\iota)\) be the universal \(R\)-ring on \(M\)
along the inclusion map \(\iota\colon R\to M\).

Our task is to show that \(U(M,\iota)\) and \(S\) are isomorphic \(R\)-rings. We accomplish this by showing
that \(U(M,\iota)\) satisfies the universal property of Proposition~\ref{prop:univsfe}
that characterizes \(S\).

First, note that in \(U(M,\iota)\) one has 
\[
\lambda_U(r)\overline{v_j}=\sum_{i=1}^n\overline{v_i}\lambda_U(\sigma_{ij}(r)) + \lambda_U(\delta_j(r)),
\]
for all \(j=1,\dots, n\), where \(\overline{v_j}=\pi\mu(v_j)\), in the notation of the
proof of Theorem~\ref{th:2}. Now, let \(A\) be an \(R\)-ring and let \(a_1,\dots,a_n\) be elements of \(A\) satisfying \eqref{eq:upsfe}. The right \(R\)-module map 
\(f\colon M\to A\) such that \(1\mapsto 1_A\) and \(v_i\mapsto a_i\), for all 
\(i=1,\dots, n\) is, in fact, an \(R\)-bimodule map and, clearly, \(f\iota=\lambda_A\),
the ring homomorphism \(R\to A\) affording \(A\) its \(R\)-ring structure. 
The universal property of \(U(M,\iota)\) guarantees that there exists a unique
\(R\)-ring homomorphism \(\varphi\colon U(M,\iota)\to A\) such that 
\(\varphi(\overline{v_i})=f(v_i)=a_i\), for all \(i=1,\dots,n\). It follows that
\(U(M,\iota)\cong 
S\) as \(R\)-rings. We have proved

\begin{corollary}\label{cor:tr}
Given a ring \(R\), a positive integer \(n\), a ring homomorphism \(\sigma\colon R\to M_n(R)\) 
and a right \(\sigma\)-derivation, the \((\sigma,\delta)\)-free skew extension 
\(R\langle x_1,\dots,x_n;\sigma,\delta\rangle\)
is isomorphic, as an \(R\)-ring, to the universal \(R\)-ring \(U(M,\iota)\), where
\(M\) is a rank \(1\) bimodule extension of a free right \(R\)-module of rank \(n\)
and \(\iota\colon R\to M\) is the inclusion map.
\qed
\end{corollary}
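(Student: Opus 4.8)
The plan is to show that the universal $R$-ring $U(M,\iota)$, together with a suitable choice of $n$ elements, satisfies the universal property of Proposition~\ref{prop:univsfe}, and then to quote Corollary~\ref{cor:uniqsfe}. First I would fix the data explicitly: let $V$ be the free right $R$-module of rank $n$ with basis $v_1,\dots,v_n$, transport $\sigma$ and $\delta$ to a ring homomorphism $\overline{\sigma}\colon R\to\End(V_R)$ and a $\overline{\sigma}$-derivation $\overline{\delta}\colon R\to\Hom(V_R,R_R)$ through the isomorphisms $M_n(R)\cong\End(V_R)$ and $R^n\cong\Hom(V_R,R_R)$, form the rank $1$ bimodule extension $M=R\oplus V$ defined by $\overline{\sigma}$ and $\overline{\delta}$, and let $\iota\colon R\to M$ be the inclusion, which is an $R$-bimodule map since $R$ is a sub-bimodule of $M$. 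Write $\overline{v_j}=j(v_j)\in U(M,\iota)$ for the images under the canonical $R$-bimodule map $j\colon M\to U(M,\iota)$ furnished by Theorem~\ref{th:2}, so that $j\iota=\lambda_U$.

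The first computation is to record the commutation relations in $U(M,\iota)$. Applying \eqref{eq:ds} to $M$ with $s=0$ gives $r\,v_j=\overline{\delta}(r)(v_j)+\overline{\sigma}(r)(v_j)=\delta_j(r)+\sum_{i=1}^n v_i\sigma_{ij}(r)$ as an identity in $M$, by the very definitions of $\overline{\sigma}$ and $\overline{\delta}$. Since $j$ is an $R$-bimodule map, applying $j$ and using $j\iota=\lambda_U$ on the summand $R$ yields $\lambda_U(r)\overline{v_j}=\sum_{i=1}^n\overline{v_i}\lambda_U(\sigma_{ij}(r))+\lambda_U(\delta_j(r))$ for all $j=1,\dots,n$ and $r\in R$; that is, the elements $\overline{v_1},\dots,\overline{v_n}$ satisfy the defining relations (iii) of a $(\sigma,\delta)$-free skew extension.

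Next I would verify the universal property. Given an $R$-ring $A$ with structure map $\lambda_A$ and elements $a_1,\dots,a_n\in A$ satisfying \eqref{eq:upsfe}, define $f\colon M=R\oplus V\to A$ by $f(s+v_1r_1+\dots+v_nr_n)=\lambda_A(s)+a_1\lambda_A(r_1)+\dots+a_n\lambda_A(r_n)$. This is clearly a right $R$-module map with $f\iota=\lambda_A$, and the only nontrivial point is that $f$ is also a left $R$-module map: for $r\in R$ one computes $f(r\cdot v_j)=\lambda_A(\delta_j(r))+\sum_i a_i\lambda_A(\sigma_{ij}(r))$ from the identity established above, and this equals $\lambda_A(r)a_j=r\cdot f(v_j)$ precisely because of \eqref{eq:upsfe}; left linearity on the summand $R$ is immediate since $R$ is a sub-bimodule of $M$ and $\lambda_A$ is a ring homomorphism. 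Hence $f$ is an $R$-bimodule map with $f\iota=\lambda_A$, so the universal property of $U(M,\iota)$ in Theorem~\ref{th:2} supplies a unique $R$-ring homomorphism $\varphi\colon U(M,\iota)\to A$ with $\varphi j=f$, i.e.\ $\varphi(\overline{v_i})=a_i$ for all $i$. Uniqueness among $R$-ring homomorphisms sending each $\overline{v_i}$ to $a_i$ follows because, as noted in the proof of Theorem~\ref{th:2}, $U(M,\iota)$ is generated as an $R$-ring by the image of $j$, hence by $\lambda_U(R)$ together with $\overline{v_1},\dots,\overline{v_n}$.

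Finally, since $U(M,\iota)$ equipped with $\overline{v_1},\dots,\overline{v_n}$ satisfies both the commutation relations and the universal property characterizing a $(\sigma,\delta)$-free skew extension, Corollary~\ref{cor:uniqsfe}, applied with $S_1=R\langle x_1,\dots,x_n;\sigma,\delta\rangle$ and $S_2=U(M,\iota)$, produces an $R$-ring isomorphism $R\langle x_1,\dots,x_n;\sigma,\delta\rangle\to U(M,\iota)$ carrying $x_i$ to $\overline{v_i}$. The one genuinely delicate step is checking that $f$ respects the left $R$-action — this is exactly where the hypothesis \eqref{eq:upsfe} on the $a_i$ is used; everything else is unwinding the definitions of $M$, of $U(M,\iota)$, and of the bimodule transport of $\sigma$ and $\delta$.
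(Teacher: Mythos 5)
Your proof is correct and follows essentially the same route as the paper: it verifies that \(U(M,\iota)\), equipped with the elements \(\overline{v_i}=j(v_i)\), satisfies the commutation relations and the universal property of Proposition~\ref{prop:univsfe}, and then invokes the uniqueness statement of Corollary~\ref{cor:uniqsfe}. You merely make explicit two steps the paper leaves to the reader — deriving the relations in \(U(M,\iota)\) from \eqref{eq:ds} via the bimodule map \(j\), and checking that \(f\) respects the left \(R\)-action using \eqref{eq:upsfe} — both of which are carried out correctly.
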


We remark that this perspective allows one to consider skew free extensions on
a not necessarily finite set of generators, by taking universal \(R\)-rings of
the form \(U(M,\iota)\), where \(M\) is a rank \(1\) bimodule extension of an
arbitrary free right \(R\)-module. Ring homomorphisms into 
column-finite infinite matrices would be an alternative approach.

\section{Regularity}\label{sec:domain}

%

In this section, we shall investigate conditions for a skew free extension to be a domain.

\medskip

As is well-known, given a ring \(R\), a ring endomorphism \(\sigma\colon R\to R\)
and a right \(\sigma\)-derivation \(\delta\colon R\to R\), the skew
polynomial ring \(R[x;\sigma,\delta]\) is a domain provided that \(R\) is
a domain and \(\sigma\) is injective (cf., \textit{e.g.}, \cite[2.9]{MR2001}). 
Once more than one variable is present, these conditions are no longer sufficient,
as the following simple example shows.

\begin{example}
Let \(k\) be a field, let \(\sigma\colon k[t]\to M_2(k[t])\) be the
\(k\)-algebra homomorphism such that 
\[
\sigma(t)=\begin{pmatrix} t&0\\0&0\end{pmatrix},
\]
that is, if \(f=\lambda_0+\lambda_1t+\dots+\lambda_nt^n\in k[t]\), with 
\(\lambda_i\in k\), for all \(i=0,\dots,n\), then
\[
\sigma(f)
=\begin{pmatrix} f&0\\0&\lambda_0\end{pmatrix},
\]
and let \(\delta\) be the zero right \(\sigma\)-derivation.
Consider the \((\sigma,\delta)\)-skew free extension \(S=k[t]\langle x_1,x_2;\sigma,\delta\rangle\)
of \(k[t]\). Although \(k[t]\) is a domain and \(\sigma\) is injective, in \(S\) one has
\[
tx_2 = x_1\sigma_{12}(t) + x_2\sigma_{22}(t) = 0,
\]
but both \(x_2\) and \(t\) are nonzero.
\end{example}

We now introduce the key concept of this section.

Given a ring \(R\), a positive integer \(n\) and a ring homomorphism \(\sigma\colon
R\to M_n(R)\), define a sequence of ring homomorphisms \(\sigma^{(r)}\colon
R\to M_{n^r}(R)\) inductively by \(\sigma^{(1)}=\sigma\) and for \(a\in R\), if
\[
\sigma^{(r)}(a) = \begin{pmatrix}
a_{11} &\dots& a_{1n^r} \\
\vdots & \ddots & \vdots \\
a_{n^r 1} & \dots & a_{n^rn^r}
\end{pmatrix}\in M_{n^r}(R),
\]
then
\[
\sigma^{(r+1)}(a) = \begin{pmatrix}
\sigma(a_{11}) &\dots& \sigma(a_{1n^r}) \\
\vdots & \ddots & \vdots \\
\sigma(a_{n^r 1}) & \dots & \sigma(a_{n^rn^r})
\end{pmatrix}\in M_{n^{r+1}}(R).
\]

\begin{definition}
We say that ring homomorphism \(\sigma\colon R\to M_n(R)\) is \emph{megainjective}
if for all \(a\in R\setminus\{0\}\) and \(r\geq 1\), \(\sigma^{(r)}(a)\) is neither the
zero matrix nor a left 
zero divisor  
in the ring \(M_{n^r}(R)\). 
 
\end{definition}

Equivalently, \(\sigma\) is megainjective if the columns of \(\sigma^{(r)}(a)\) are right \(R\)-linearly independent for all \(r\geq 1\) and \(a\in R\setminus\{0\}\). Clearly, if \(\sigma\) is megainjective, then \(\sigma\) is injective and \(R\) is a domain. In fact, for \(n=1\), a
ring homomorphism \(\sigma\colon R\to R\) is megainjective if, and only if, \(R\) is
a domain and \(\sigma\) is injective.

We shall illustrate this concept with a few examples.

\begin{example}\label{ex:dr} If \(R\) is a division ring, then any ring homomorphism
\(\sigma\colon R\to M_n(R)\) is megainjective because, in this case, \(\sigma(a)\), and therefore 
\(\sigma^{(r)}(a)\), is invertible for all \(a\in R\setminus\{0\}\) and \(r\geq 1\).
\end{example}

\begin{example}\label{ex:ut}
If \(R\) is a domain and \(\sigma\colon R\to M_n(R)\) is a ring homomorphism such
that \(\sigma(a)\) is upper (respectively, lower) triangular with \(\sigma_{ii}(a)\neq 0\)
for all \(a\in R\setminus\{0\}\) and \(i=1,\dots,n\), then \(\sigma\) is megainjective, because,
in this case, for \(r\geq 1\), the matrix 
\(\sigma^{(r)}(a)\) is upper (respectively, lower) triangular with all the elements on the 
main diagonal being nonzero, but this implies that 
the columns of $\sigma^{(r)}(a)$ are right linearly independent over \(R\).
\end{example}

We say that a ring homomorphism \(\sigma\colon R\to M_n(R)\) is \emph{upper (respectively, lower) 
triangularizable} if there exists an invertible matrix \(P\in M_n(R)\) such that
\(P\sigma(a)P^{-1}\) is upper (respectively, lower) triangular for all \(a\in R\). Therefore,
if \(R\) is a domain and \(\sigma\colon R\to M_n(R)\) is
upper triangularizable under conjugation by \(P\) with all diagonal entries of 
\(P\sigma(a)P^{-1}\) nonzero for all \(a\in R\setminus\{0\}\), then \(\sigma\) is
megainjective. This is because the ring homomorphism \(\tau\colon R\to M_n(R)\)
defined by \(\tau(a)=P\sigma(a)P^{-1}\), for \(a\in R\), satisfies the condition
in Example~\ref{ex:ut} for being megainjective and, for all \(a\in R\), the
matrices \(\tau^{(r)}(a)\) and \(\sigma^{(r)}(a)\) are conjugate over \(R\).

\begin{example}
If \(R\) is a right Ore domain and \(\sigma\colon R\to M_n(R)\) is a ring 
homomorphism such that, for all
\(a\in R\setminus\{0\}\), the columns of \(\sigma(a)\) are right \(R\)-linearly independent, 
then \(\sigma\) is megainjective. This fact can be proved as follows. Suppose the Ore division ring of fractions is \(\varphi\colon R\hookrightarrow Q\). Given
a positive integer \(s\), and a ring homomorphism \(\psi\colon S\to S'\), we shall denote 
by \(M_s(\psi)\) the homomorphism from \(M_s(S)\) into \(M_s(S')\) defined
by \(M_s(\psi)\bigl((s_{ij})\bigr)=\bigl(\psi(s_{ij})\bigr)\). 
If \(A\in M_n(R)\) is a matrix whose columns are right \(R\)-linearly
independent, then, clearly, the columns of \(M_n(\varphi)(A)\) are right \(Q\)-linearly independent, and,
therefore, \(M_n(\varphi)(A)\) is invertible in \(M_n(Q)\). Thus,
there exists a homomorphism \(\overline{\sigma}\colon Q\rightarrow M_n(Q)\)
such that the  diagram 
\[
\begin{tikzcd}
	R &&& {M_n(R)} \\
	\\
	Q &&& {M_n(Q)}
	\arrow["\sigma", from=1-1, to=1-4]
	\arrow["\varphi", from=1-1, to=3-1]
	\arrow["{M_n(\varphi)}", from=1-4, to=3-4]
	\arrow["{\overline{\sigma}}", from=3-1, to=3-4]
\end{tikzcd}
\]
commutes. Applying the functor \(M_n\) to this diagram, we obtain
the commutativity of the square on the right hand-side of
\[
\begin{tikzcd}
	R &&& {M_n(R)} &&& {M_{n^2}(R)}\\
	\\
	Q &&& {M_n(Q)} &&& {M_{n^2}(Q)}
	\arrow["\sigma", from=1-1, to=1-4]
	\arrow["\varphi", from=1-1, to=3-1]
	\arrow["{M_n(\varphi)}", from=1-4, to=3-4]
	\arrow["{\overline{\sigma}}", from=3-1, to=3-4]
	\arrow["{M_n(\sigma)}", from=1-4, to=1-7]
	\arrow["{M_{n^2}(\varphi)}", from=1-7, to=3-7]
	\arrow["{M_n(\overline{\sigma})}", from=3-4, to=3-7]
\end{tikzcd}
\]

If \(a\in R\setminus\{0\}\) is such that the columns of \(\sigma(a)\) are right \(R\)-linearly
independent, then, as we have seen above, \(\overline{\sigma}\varphi(a)\) is invertible 
in \(M_n(Q)\) and,
hence, \(M_{n^2}(\varphi)M_n(\sigma)\sigma(a)=
M_{n^2}(\overline{\sigma})\bigl(\overline{\sigma}\varphi(a)\bigr)\) is an invertible
element of \(M_{n^2}(Q)\). Since \(M_n(\sigma)\sigma=\sigma^{(2)}\),  this implies that the 
columns of \(\sigma^{(2)}(a)\) must be right \(R\)-linearly independent. An inductive argument
proves that the columns of \(\sigma^{(r)}(a)\) are right \(R\)-linearly independent for all
\(r\geq 1\).
\end{example}

Now let \(\sigma\colon R\to M_n(R)\) be a ring homomorphism, let \(\delta\colon R\to R^n\)
be a right \(\sigma\)-derivation, and consider the \((\sigma,\delta)\)-free skew extension 
\(S=R\langle x_1,\dots,x_n;\sigma,\delta\rangle\) of \(R\). So \(S\) is a ring which contains 
\(R\), as a subring, and an \(n\)-element set \(X=\{x_1,\dots,x_n\}\) freely generating a submonoid
\(\langle X\rangle\) of its multiplicative monoid. Moreover, the elements of
\(S\) are 
written uniquely in the form
\[
\sum_{w\in\langle X\rangle} wa_w,
\]
where, for each \(w\in \langle X \rangle\), \(a_w\in R\) and the set \(\{w\in \langle X \rangle
\mid r_w\neq 0\}\) is finite, and, in \(S\), the
commutation rules 
\begin{equation}\label{x_ia}
ax_j = \sum_{i=1}^n x_i\sigma_{ij}(a) + \delta_j(a),
\quad\text{for all \(j=1,\dots,n\) and \(a\in R\),}
\end{equation}
are satisfied.

We shall see, in Corollary~\ref{cor:sfedom}, that a sufficient condition for 
\(S\) to be a domain is that 
\(\sigma\) is megainjective. For that, we shall need a lemma, which
uses an inductive argument based on a lexicographic ordering on \(\langle X\rangle\) 
described below.

For a positive integer \(r\), denote the subset of \(\langle X\rangle\) consisting of 
all monomials of length \(r\) by \(\langle X\rangle_r\). Order the free generators in the natural way, that is, 
declare \(x_i<x_j\) if, and only if, \(i<j\), and then order \(\langle X\rangle_r\)
lexicographically: \(x_{i_1}x_{i_2}\dotsm x_{i_r}
< x_{j_1}x_{j_2}\dotsm x_{j_r}\) if, and only if, \(x_{i_1}<x_{j_1}\) or 
there exists \(s\in\{2,\dots,r\}\) such that
\(x_{i_1}=x_{j_1}, x_{i_2}=x_{j_2},\dots, x_{i_{s-1}}=x_{j_{s-1}}\) and \(x_{i_s}<x_{j_s}\).
This defines a total ordering on the set \(\langle X\rangle_r\).
We shall list the \(n^r\) elements of \(\langle X\rangle_r\) ascendantly according to 
this order:
\begin{equation}\label{eq:orderingwords}
w_1<w_2<\dots <w_{n^r},
\end{equation}
and do the same with the \(n^{r-1}\) elements of \(\langle X\rangle_{r-1}\) 
lexicographically ordered:
\begin{equation}\label{eq:orderingwordsv}
v_1<v_2<\dots<v_{n^{r-1}}.
\end{equation}


Given \(k,l\in\{1,\dots,n^{r-1}\}\) and \(p,q\in\{1,\dots,n\}\), it easy to see that
\[
v_kx_p < v_lx_q\quad\text{if, and only if, \(k<l\) or (\(k=l\) and \(p<q\)).}
\]
Therefore, there are exactly \(n(l-1)+q-1\) elements in \(\langle X\rangle_r\)
that are smaller than \(v_lx_q\). It follows that \(v_lx_q=w_{n(l-1)+q}\). Or,
equivalently, given \(j\in\{1,2,\dots,n^r\}\), let \(l\) and \(q\) be the integers such 
that \(j=n(l-1)+q\)
with \(1\leq q \leq n\), then \(w_j=v_lx_q\).

For a non-negative integer \(m\), we shall denote the right \(R\)-submodule of \(S\)
generated by all the monomials of \(\langle X\rangle\) of 
length \(\leq m\) by \(S_m\). 
It follows from \eqref{x_ia} that 
\begin{equation}\label{eq:xism}
S_mx_j\subseteq S_{m+1}, \quad\text{for all \(j=1,\dots,n\) and \(m\geq 0\).}
\end{equation}

\begin{lemma}\label{lem:scalartimesword}
Let \(r\) be a positive integer, let
\(j\in\{1,2,\dots, n^r\}\), let \(w_j\in\langle X\rangle_r\) be as in 
\eqref{eq:orderingwords} and let \(a\in R\). Then
\begin{equation*}
    aw_j \equiv \sum_{i=1}^{n^r} w_i\sigma^{(r)}_{ij}(a) \ \dpmod{S_{r-1}},
\end{equation*}
where, for all \(i=1,\dots, n^r\), \(w_i\in \langle X\rangle_r\) are listed according
to \eqref{eq:orderingwords}.
\end{lemma}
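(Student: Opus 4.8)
The plan is to prove the congruence by induction on $r$, exploiting the recursive block structure of $\sigma^{(r)}$ together with the combinatorial description of the lexicographic ordering recorded in the paragraph immediately preceding the statement. For the base case $r=1$ one has $\langle X\rangle_1=\{x_1<\dots<x_n\}$, so $w_j=x_j$, and the asserted congruence is simply the commutation rule \eqref{x_ia}, namely $ax_j=\sum_{i=1}^n x_i\sigma_{ij}(a)+\delta_j(a)$, with the correction term $\delta_j(a)$ lying in $R=S_0=S_{r-1}$.

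For the inductive step, suppose $r\geq 2$ and that the statement holds for all monomials of length $r-1$. Writing $w_j=v_lx_q$ with $j=n(l-1)+q$ and $1\leq q\leq n$, as established before the lemma, I would first apply the induction hypothesis to $v_l\in\langle X\rangle_{r-1}$, obtaining $av_l=\sum_{k=1}^{n^{r-1}}v_k\sigma^{(r-1)}_{kl}(a)+t$ for some $t\in S_{r-2}$. Multiplying on the right by $x_q$, the term $tx_q$ lies in $S_{r-1}$ by \eqref{eq:xism} and may be discarded. Each remaining term $v_k\sigma^{(r-1)}_{kl}(a)x_q$ is then handled by pushing the coefficient $\sigma^{(r-1)}_{kl}(a)\in R$ past $x_q$ via \eqref{x_ia}: this yields $\sum_{p=1}^n v_kx_p\,\sigma_{pq}\bigl(\sigma^{(r-1)}_{kl}(a)\bigr)$ together with $v_k\delta_q\bigl(\sigma^{(r-1)}_{kl}(a)\bigr)\in v_kR\subseteq S_{r-1}$, which is again discarded. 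Hence modulo $S_{r-1}$ one is left with the double sum $\sum_{k=1}^{n^{r-1}}\sum_{p=1}^n v_kx_p\,\sigma_{pq}\bigl(\sigma^{(r-1)}_{kl}(a)\bigr)$.

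The final step, which is the one demanding the most care, is to recognise this double sum as $\sum_{i=1}^{n^r}w_i\sigma^{(r)}_{ij}(a)$, and it rests on two purely combinatorial facts. First, the indexing identity established before the statement gives $v_kx_p=w_{n(k-1)+p}$, so that $i=n(k-1)+p$ runs bijectively over $\{1,\dots,n^r\}$ as $(k,p)$ runs over $\{1,\dots,n^{r-1}\}\times\{1,\dots,n\}$. Second, unwinding the block definition of $\sigma^{(r)}$, the $(i,j)$-entry of $\sigma^{(r)}(a)$ with $i=n(k-1)+p$ and $j=n(l-1)+q$ equals the $(p,q)$-entry of the block $\sigma\bigl(\sigma^{(r-1)}_{kl}(a)\bigr)$, that is, $\sigma_{pq}\bigl(\sigma^{(r-1)}_{kl}(a)\bigr)$. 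Substituting both identities converts the double sum into $\sum_{i=1}^{n^r}w_i\sigma^{(r)}_{ij}(a)$, completing the induction. The algebra is nothing more than iterated use of \eqref{x_ia} and \eqref{eq:xism}; the only genuine subtlety is keeping this index bookkeeping straight, in particular matching the block decomposition of $\sigma^{(r)}$ with the lexicographic listing $w_i=v_kx_p$ of the length-$r$ monomials.
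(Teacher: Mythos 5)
Your proof is correct and follows essentially the same route as the paper's: induction on $r$, decomposing $w_j=v_lx_q$ with $j=n(l-1)+q$, applying the inductive hypothesis to $v_l$, pushing coefficients past $x_q$ via \eqref{x_ia}, and matching the resulting double sum to column $n(l-1)+q$ of $\sigma^{(r)}(a)$ through the block structure. Your explicit verification of the index identification $w_{n(k-1)+p}=v_kx_p$ and $\sigma^{(r)}_{n(k-1)+p,\,n(l-1)+q}(a)=\sigma_{pq}\bigl(\sigma^{(r-1)}_{kl}(a)\bigr)$ is in fact slightly more detailed than what the paper records.
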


\begin{proof}
    We proceed by induction on \(r\), with the base case \(r=1\) following from \eqref{x_ia}. 
		
Suppose that \(r\geq 2\) and that the result holds for \(r-1\). By what we have seen above, 
we can write 
\(w_j=v_lx_q\), for some \(q\in\{1,\dots,n\}\), \(l\in\{1,\dots,n^{r-1}\}\) and
\(v_l\in \langle X\rangle_{r-1}\) as in \eqref{eq:orderingwordsv}. 
Using the induction hypothesis, there exists \(f\in S_{r-2}\) such that 
    \begin{equation*}\begin{split}
    aw_j &= av_lx_q\\ 
    &=\biggl(\sum_{k=1}^{n^{r-1}}v_k\sigma^{(r-1)}_{kl}(a)+f\biggr)x_q\\
    &=\sum_{k=1}^{n^{r-1}}\sum_{i=1}^nv_kx_i\sigma_{iq}\bigl(\sigma^{(r-1)}_{kl}(a)\bigr) +
    \sum_{k=1}^{n^{r-1}}v_q\delta_q\bigl(\sigma^{(r-1)}_{kl}(a)\bigr) +fx_q.
    \end{split}\end{equation*}
		Since \(\sigma_{iq}\bigl(\sigma^{(r-1)}_{kl}(a)\bigr)\) are precisely the elements in
		column \(n(l-1)+q\) of \(\sigma^{(r)}(a)\) and \(\sum_{k=1}^{n^{r-1}}v_q
		\delta_q\bigl(\sigma^{(r-1)}_{kl}(a)\bigr) +fx_q\in S_{r-1}\), the result follows.
\end{proof}

As a consequence of \eqref{eq:xism}, we have that \(S_mw\subseteq S_{m+\lvert w \rvert}\),
for all \(w\in\langle X\rangle\) and \(m\geq 0\), and, so,
\(S_m S_n\subseteq S_{m+n}\), 
for all \(m,n\geq 0\).
Because, \(S_0\subseteq  S_1\subseteq S_2\subseteq \dots\) and \(\cup_{m\geq 0} S_m = S\), the function \(\deg\colon S \to \mathds{N}\cup
\{-\infty\}\) defined by
\begin{equation}\label{eq:ps}
\deg(f) = \min\{m \mid f\in S_m\},
\end{equation}
where we set \(S_{-\infty}=\{0\}\), is a \emph{pseudo-valuation} as defined in
\cite[Sec.~2.2]{pC1985}, that is, one has
\renewcommand{\labelenumi}{\Roman{enumi}.}
\begin{enumerate}
	\item \(\deg(f)\geq 0\), for all \(f\in S\setminus\{0\}\), and \(\deg(0)=-\infty\),
	\item \(\deg(f-g)\leq\max\{\deg(f),\deg(g)\}\), for all \(f,g\in S\),
	\item \(\deg(fg)\leq\deg(f)+\deg(g)\), for all \(f,g\in S\),
	\item \(\deg(1)=0\).
\end{enumerate}
When equality hols in III, we say that \(\deg\) is a \emph{degree-function}.

In the next result, we present necessary and sufficient conditions for \(\deg\) to
be a degree-function.

\begin{theorem}\label{theo:degreeisadditive}
    Let \(R\) be a ring, let \(n\) be a positive integer, let \(\sigma\colon R\to M_n(R)\) be a ring homomorphism, let \(\delta\colon R\to R^n\) be a right \(\sigma\)-derivation and let \(\deg\) be the
		pseudo-valuation on 
		the \((\sigma,\delta)\)-free skew extension \(S=R\langle x_1,\dots,x_n;\sigma,\delta
		\rangle\)
of \(R\), as defined in \eqref{eq:ps}. Then \(\deg\) is a degree-function
if, and only if, \(\sigma\) is megainjective.
\end{theorem}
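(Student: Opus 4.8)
The plan is to prove both implications by analysing, for a fixed degree $r$, how multiplication by scalars from $R$ acts on the "top layer" of $S$, using Lemma~\ref{lem:scalartimesword} as the main computational engine. The key observation is that, since $S$ is a free right $R$-module on $\langle X\rangle$ and $S_{r}/S_{r-1}$ is a free right $R$-module on the images of $\langle X\rangle_r$, an arbitrary element $f$ of degree exactly $r$ can be written as $f = \sum_{v\in\langle X\rangle_r} v c_v + (\text{lower})$ with not all $c_v$ zero. To test whether $\deg$ is a degree-function it suffices to test products of the form $f\cdot w$ and, more generally, $f\cdot g$; and since $S_r S_s \subseteq S_{r+s}$ always holds, the content is whether the product can drop strictly in degree. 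I would reduce this, via the module-freeness, to understanding when $a w_j$ can land in $S_{r-1}$ for $a\neq 0$, i.e.\ to the behaviour of $\sigma^{(r)}$.

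For the direction \emph{$\sigma$ megainjective $\Rightarrow$ $\deg$ is a degree-function}: take $f$ with $\deg f = r$ and $g$ with $\deg g = s$, and write their top-layer expansions $f \equiv \sum_k v_k a_k \pmod{S_{r-1}}$, $g \equiv \sum_\ell u_\ell b_\ell \pmod{S_{s-1}}$ over $\langle X\rangle_r$ and $\langle X\rangle_s$ respectively. Expanding $fg$ and pushing every scalar to the right past the monomials of $g$ using Lemma~\ref{lem:scalartimesword}, the degree-$(r+s)$ part of $fg$ is $\sum_{k,\ell}\sum_i (v_k u_i)\,\sigma^{(s)}_{i\ell}(a_k)\,b_\ell$ modulo $S_{r+s-1}$, where $v_k u_i$ ranges (bijectively, with the indexing described in the text) over $\langle X\rangle_{r+s}$. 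Collecting the coefficient of a fixed top monomial $v_k u_i$, one gets $\sum_\ell \sigma^{(s)}_{i\ell}(a_k) b_\ell$; I would then argue that if this vanished for every $i$ and every $k$, then, fixing a $k$ with $a_k\neq 0$, the column vector $(b_\ell)_\ell$ would be annihilated on the left by the matrix $\sigma^{(s)}(a_k)$, forcing $(b_\ell)_\ell = 0$ by megainjectivity of $\sigma$ (applied at level $s$) — contradicting $\deg g = s$. Hence the degree-$(r+s)$ component is nonzero and $\deg(fg) = r+s$. (A small point to handle: megainjectivity says $\sigma^{(s)}(a_k)$ is not a left zero divisor in $M_{n^s}(R)$; I would phrase the annihilation as a matrix identity, padding the column vector $(b_\ell)$ into a square matrix, to invoke exactly that hypothesis, equivalently the "columns right-linearly independent" reformulation given after the definition.)

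For the converse \emph{$\deg$ is a degree-function $\Rightarrow$ $\sigma$ megainjective}: I argue contrapositively. Suppose $\sigma$ is not megainjective, so there is $a\in R\setminus\{0\}$ and $r\geq 1$ with $\sigma^{(r)}(a)$ either zero or a left zero divisor in $M_{n^r}(R)$; in either case the columns of $\sigma^{(r)}(a)$ are right $R$-linearly dependent, so there are $c_1,\dots,c_{n^r}\in R$, not all zero, with $\sum_j \sigma^{(r)}_{ij}(a)\, c_j = 0$ for all $i=1,\dots,n^r$. Set $g = \sum_{j} w_j c_j \neq 0$, an element with $\deg g = r$ (here I use that $\langle X\rangle_r$ is part of the free-module basis, so $g\neq 0$ and $g\notin S_{r-1}$). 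Then by Lemma~\ref{lem:scalartimesword}, $a g \equiv \sum_j (a w_j) c_j \equiv \sum_j\sum_i w_i \sigma^{(r)}_{ij}(a) c_j = \sum_i w_i\bigl(\sum_j \sigma^{(r)}_{ij}(a) c_j\bigr) = 0 \pmod{S_{r-1}}$, so $ag \in S_{r-1}$, i.e.\ $\deg(ag) \leq r-1 < 0 + r = \deg(a) + \deg(g)$ (noting $\deg a = 0$ since $a\neq 0$). Thus $\deg$ is not a degree-function.

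The main obstacle I anticipate is purely bookkeeping rather than conceptual: in the forward direction, correctly matching the indexing of $\langle X\rangle_{r+s}$ against the pairs $(v_k, u_i)$ — i.e.\ verifying that as $k$ runs over $\langle X\rangle_r$ and $i$ over $\langle X\rangle_s$ the products $v_k u_i$ give each element of $\langle X\rangle_{r+s}$ exactly once, with the scalar that travels to its right being exactly the appropriate entry of $\sigma^{(s)}$ (not $\sigma^{(r+s)}$) — so that the coefficient extraction is clean and the reduction to "$\sigma^{(s)}(a_k)$ is not a left zero divisor" is legitimate. Once that correspondence is pinned down (it is essentially the concatenation map $\langle X\rangle_r \times \langle X\rangle_s \to \langle X\rangle_{r+s}$ being a bijection, compatible with the lexicographic orders, exactly as already used in the text for the $r-1,1$ split), the rest is a direct unwinding of Lemma~\ref{lem:scalartimesword}.
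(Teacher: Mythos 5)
Your proposal is correct and follows essentially the same route as the paper: both directions reduce, via Lemma~\ref{lem:scalartimesword} and the right $R$-freeness of the monomial basis, to the observation that $\deg\bigl(a\sum_j w_j b_j\bigr)<r$ exactly when the columns of $\sigma^{(r)}(a)$ kill the vector $(b_j)$, with the forward direction then extracting the coefficient of a top monomial $v_{k}u_i$ in $fg$ (the paper fixes one index $l_0$ with $a_{l_0}\neq 0$ rather than arguing by contradiction over all $k$, but the computation is identical). The bookkeeping you flag as the main obstacle is exactly what Lemma~\ref{lem:scalartimesword} and the indexing $w_{n(l-1)+q}=v_lx_q$ already settle, so no gap remains.
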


\begin{proof}
Fix \(a\in R\), let \(r\) be a positive integer, let 
\(L_1, \dots, L_{n^r}\) be the columns of \(\sigma^{(r)}(a)\) and let
\(w_1<w_2<\dots<w_{n^r}\) be the monomials of \(\langle X\rangle_r\). Given 
\(b_1,b_2,\dots,b_{n^r}\in R\), by Lemma~\ref{lem:scalartimesword}, 
\[
a\sum_{j=1}^{n^r}w_jb_j \equiv
\sum_{i=1}^{n^r}w_i\sum_{j=1}^{n^r}\sigma^{(r)}_{ij}(a)b_j \ \dpmod{S_{r-1}}.
\]
Therefore, 
\begin{equation}\label{eq:equivalence}
\deg\biggl(a\sum_{j=1}^{n^r}w_jb_j\biggl) <r,
\quad\text{if, and only if,}\quad
L_1b_1+L_2b_2+\dots+L_{n^r}b_{n^r}=0, 
\end{equation}
since the elements of \(\langle X\rangle_r\)
are right linearly independent over \(R\).

Suppose that \(\deg\) is a degree function. 
If \(a\neq 0\) and \(b_1,b_2,\dots,b_{n^r}\in R\) are such that \(L_1b_1+L_2b_2+\dots+
L_{n^r}b_{n^r}=0\), then, by \eqref{eq:equivalence}, 
\[
r>\deg\biggl(a\sum_{j=1}^{n^r}w_jb_j\biggl) = 
\deg(a)+\deg\biggl(\sum_{j=1}^{n^r}w_jb_j\biggr)=
\deg\biggl(\sum_{j=1}^{n^r}w_jb_j\biggr),
\]
which is only possible if \(b_1=b_2=\dots=b_{n^r}=0\). 
It follows that \(\sigma\) is megainjective.

Conversely, suppose that \(\sigma\) is megainjective. Let
\(f,g\in S\). If \(f=0\) or \(g=0\), then, clearly, \(\deg(fg)=\deg(f)+\deg(g)\). Assume
that both \(f\) and \(g\) are nonzero, with, say, \(\deg(f)=s, \deg(g)=r\) and write
\[
f = \sum_{l=1}^{n^s}v_la_l + \overline{f} \quad\text{and}\quad
g = \sum_{j=1}^{n^r}w_jb_j + \overline{g}
\]
with \(a_l\in R\), not all zero, \(b_j\in R\), not all zero,
\(\{v_1,\dots,v_{n^s}\}=\langle X\rangle_s\),
\(\{w_1,\dots,w_{n^r}\}=\langle X\rangle_r\), \(\overline{f}\in S_{s-1}\),
\(\overline{g}\in S_{r-1}\).
Then,
\begin{equation}\label{eq:fg}
fg = \sum_{l=1}^{n^s}v_l\Bigl(a_l\sum_{j=1}^{n^r}w_jb_j\Bigr) + h,
\end{equation}
for some \(h\in S_{s+r-1}\).
Let \(l_0\) be such that \(a_{l_0}\neq 0\).
Because \(\sigma\) is
megainjective, the columns of \(\sigma^{(r)}(a_{l_0})\)
are right \(R\)-linearly independent, so, by \eqref{eq:equivalence},
\begin{equation}\label{eq:adddeg}
\deg\biggl(a_{l_0}\sum_{j=1}^{n^r}w_jb_j\biggl)=r
\end{equation}
since some \(b_j\) is nonzero. It, then,
follows from \eqref{eq:fg} and \eqref{eq:adddeg} that \(\deg(fg)=s+r\).

\end{proof}

The following are immediate consequences of the theorem.

\begin{corollary}\label{cor:sfedom}
Let \(R\) be a ring, let \(n\) be a positive integer,
let \(\sigma\colon R\to M_n(R)\) be a megainjective ring homomorphism, let 
\(\delta\colon R\to R^n\) be a right \(\sigma\)-derivation and let 
\(S=R\langle x_1,\dots,x_n;\sigma,\delta\rangle\) be the \((\sigma,\delta)\)-free skew extension 
of \(R\). Then
\renewcommand{\labelenumi}{(\roman{enumi})}
\begin{enumerate}
    \item \(S\) is a domain;
    \item \(U(S)=U(R)\);
    \item \(S\) is semiprimitive.\qed
\end{enumerate}
\end{corollary}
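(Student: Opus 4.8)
The plan is to deduce all three items from Theorem~\ref{theo:degreeisadditive}, which tells us that when $\sigma$ is megainjective the pseudo-valuation $\deg$ on $S$ is in fact a degree-function, i.e.\ $\deg(fg)=\deg(f)+\deg(g)$ for all $f,g\in S$.

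For (i), suppose $f,g\in S$ are nonzero. Then $\deg(f),\deg(g)\geq 0$, so $\deg(fg)=\deg(f)+\deg(g)\geq 0 > -\infty$, whence $fg\neq 0$. Thus $S$ has no zero divisors; since $S$ contains $1$, it is a domain.

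For (ii), one inclusion $U(R)\subseteq U(S)$ is immediate since $R$ is a subring of $S$ containing $1_S$. For the reverse, let $u\in S$ be a unit, say $uv=vu=1$. Applying the degree-function property, $0=\deg(1)=\deg(uv)=\deg(u)+\deg(v)$, and since both degrees are non-negative we get $\deg(u)=\deg(v)=0$, i.e.\ $u,v\in S_0=R$. Hence $u$ is a unit of $R$, so $U(S)\subseteq U(R)$.

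For (iii), recall that the Jacobson radical $J(S)$ consists of elements $z$ such that $1-sz$ is (left) invertible for every $s\in S$; in particular every $z\in J(S)$ has $1+z\in U(S)=R$, forcing $z\in R$, so $J(S)\subseteq R$, and moreover $J(S)$ is an ideal of $S$ contained in $R$. If $z\in J(S)$ were nonzero, then $zx_1\in J(S)$ (as $J(S)$ is an ideal), but $zx_1\notin R=S_0$: indeed by Lemma~\ref{lem:scalartimesword} the element $zx_1\equiv\sum_i x_i\sigma_{i1}(z)\pmod{S_0}$, and since $\sigma$ is megainjective $\sigma(z)$ is not the zero matrix, so some $\sigma_{i1}(z)\neq 0$ and $zx_1$ has degree $1$, contradicting $J(S)\subseteq R$. (Alternatively, and more cleanly: $J(S)$ is a nil-free ideal-worth of argument—better to argue that $J(S)\cap R$ is contained in $J(R)$ or simply that the degree-function forces $J(S)$ to contain no nonzero element by the same $zx_1$ computation.) Hence $J(S)=0$ and $S$ is semiprimitive. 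The main obstacle is item (iii): one must make sure the argument that a nonzero radical element cannot survive multiplication by an indeterminate is airtight, using megainjectivity to guarantee $\sigma(z)\neq 0$ raises the degree, together with the fact that $J(S)$ is a two-sided ideal so it is closed under right multiplication by $x_1$.
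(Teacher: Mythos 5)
Your proof is correct and follows essentially the same route as the paper: items (i) and (ii) are read off directly from the degree-function property established in Theorem~\ref{theo:degreeisadditive}, and for (iii) you use (ii) to force \(J(S)\subseteq R\) and then multiply a putative nonzero radical element by an indeterminate to raise the degree. (The paper works with \(x_1f\) and the unit \(b=1-x_1f\); you work with \(zx_1\) and the fact that \(J(S)\) is a two-sided ideal --- both are fine.) The one slip is the inference ``\(\sigma(z)\) is not the zero matrix, so some \(\sigma_{i1}(z)\neq 0\)'': a nonzero matrix can perfectly well have a zero first column, so this does not follow as stated. It is nevertheless true under megainjectivity, since a zero \(j\)-th column of \(\sigma(z)\) would give \(\sigma(z)E_{jj}=0\), making \(\sigma(z)\) a left zero divisor; but the cleanest fix is the one you yourself propose in the parenthetical, namely to invoke \(\deg(zx_1)=\deg(z)+\deg(x_1)=0+1=1\) from the degree-function property, so that \(zx_1\notin S_0=R\) --- which is exactly the contradiction the paper uses.
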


\begin{proof}
Items (i) and (ii) are clear consequences of Theorem~\ref{theo:degreeisadditive}. For (iii), let 
 \(f\in J(S)\), the Jacobson radical
of \(S\), and set \(b=1-x_1f\). By (ii), \(1-f, b\in U(R)\). So \(f\in R\) and
\(x_1f=1-b\in R\). If \(f\neq 0\), then \(x_1f\neq 0\), by (i), which 
implies \(1=\deg(x_1f)=\deg(1-b)=0\). It follows that \(f=0\), therefore \(S\) is semiprimitive.
\end{proof}

When no \(\sigma\)-derivation is present, megainjectivity of
\(\sigma\) is also a necessary conditions for a skew free extension to be a domain,
as the next result shows.

\begin{corollary}
    Let \(R\) be a ring, let \(n\) be a positive integer and let \(\sigma \colon R\to M_n(R)\)
		be a ring homomorphism.
 The following statements are equivalent:
\renewcommand{\labelenumi}{(\alph{enumi})}
\begin{enumerate}
  \item \(\sigma\) is megainjective.
	\item \(\deg\) is a degree-function on \(R\langle x_1,\dots, x_n; \sigma,\delta\rangle\) 
	  for any right \(\sigma\)-derivation \(\delta\).
  \item \(R\langle x_1,\dots, x_n; \sigma,0\rangle\) is a domain, where \(0\) 
	  denotes the zero \(\sigma\)-derivation. 
   \end{enumerate}

In particular, if \(R\langle x_1,\dots, x_n; \sigma,0\rangle\) is a domain, then so is 
\(R\langle x_1,\dots, x_n; \sigma,\delta\rangle\) for any right \(\sigma\)-derivation \(\delta\).
\end{corollary}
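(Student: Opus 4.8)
The plan is to prove the chain of equivalences (a) $\Rightarrow$ (b) $\Rightarrow$ (c) $\Rightarrow$ (a), after which the final ``in particular'' claim is immediate: if $R\langle x_1,\dots,x_n;\sigma,0\rangle$ is a domain, then by (c) $\Rightarrow$ (a) the homomorphism $\sigma$ is megainjective, and then Corollary~\ref{cor:sfedom}(i) applied to the free skew extension $R\langle x_1,\dots,x_n;\sigma,\delta\rangle$ shows it too is a domain.

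The implication (a) $\Rightarrow$ (b) is simply Theorem~\ref{theo:degreeisadditive}: megainjectivity of $\sigma$ makes $\deg$ a degree-function on $R\langle x_1,\dots,x_n;\sigma,\delta\rangle$ for \emph{every} right $\sigma$-derivation $\delta$, since $\delta$ is arbitrary in that statement. For (b) $\Rightarrow$ (c), I would specialise $\delta$ to the zero $\sigma$-derivation (which is trivially a right $\sigma$-derivation, as noted early in Section~\ref{sec:def}, because $\sigma_{ij}(1)=\delta_j(1)=0$); then $\deg$ is a degree-function on $S=R\langle x_1,\dots,x_n;\sigma,0\rangle$, and a ring carrying a degree-function is automatically a domain — if $f,g$ are nonzero then $\deg(fg)=\deg(f)+\deg(g)\geq 0$, so $fg\neq 0$ since $\deg(0)=-\infty$. (Note $1\neq 0$ because $\deg(1)=0\neq-\infty$, so $S$ is not the zero ring.)

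The only implication with real content is (c) $\Rightarrow$ (a), and it is essentially a reversal of the ``Suppose that $\deg$ is a degree function'' half of the proof of Theorem~\ref{theo:degreeisadditive}, specialised to $\delta=0$ where the computations become cleaner. Suppose $S=R\langle x_1,\dots,x_n;\sigma,0\rangle$ is a domain; I must show that for every $a\in R\setminus\{0\}$ and every $r\geq 1$ the columns $L_1,\dots,L_{n^r}$ of $\sigma^{(r)}(a)$ are right $R$-linearly independent. Suppose $L_1b_1+\dots+L_{n^r}b_{n^r}=0$ with $b_j\in R$. By Lemma~\ref{lem:scalartimesword} (with $\delta=0$ the congruence modulo $S_{r-1}$ is in fact an equality, since the error term $\sum_k v_q\delta_q(\cdot)+fx_q$ vanishes when $\delta=0$ and $f=0$), we get $a\sum_{j}w_jb_j=\sum_i w_i(L_i\text{-combination})=\sum_i w_i\sum_j\sigma^{(r)}_{ij}(a)b_j=0$. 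Since $S$ is a domain and $a\neq 0$, this forces $\sum_j w_jb_j=0$, and then right linear independence of $\langle X\rangle_r$ over $R$ (property (ii) of a skew free extension) gives $b_1=\dots=b_{n^r}=0$. Hence $\sigma$ is megainjective.

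I expect the main obstacle to be a purely expository one: making sure the $\delta=0$ specialisation of Lemma~\ref{lem:scalartimesword} is stated as a genuine equality rather than a congruence, and keeping the bookkeeping of the column indexing $w_j=v_lx_q$ with $j=n(l-1)+q$ straight when quoting the lemma. There is no deep difficulty; the content is entirely carried by Theorem~\ref{theo:degreeisadditive} and the definition of a degree-function, and the proof can be written in a few lines once the three implications are laid out as above.
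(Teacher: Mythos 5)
Your proof is correct and follows essentially the same route as the paper: Theorem~\ref{theo:degreeisadditive} carries the equivalence of (a) and (b), a degree-function forces the absence of zero divisors for the step to (c), and (c) $\Rightarrow$ (a) reverses the argument via Lemma~\ref{lem:scalartimesword} specialised to $\delta=0$, exactly as in \eqref{eq:equivalence}. The only cosmetic difference is that you arrange the implications as a cycle (a) $\Rightarrow$ (b) $\Rightarrow$ (c) $\Rightarrow$ (a), whereas the paper proves (a) $\Leftrightarrow$ (b) and (a) $\Leftrightarrow$ (c) separately, citing Corollary~\ref{cor:sfedom} for (a) $\Rightarrow$ (c).
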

\begin{proof}
That (a) and (b) are equivalent is the content of Theorem~\ref{theo:degreeisadditive};
 and that (a) implies (c) follows from Corollary~\ref{cor:sfedom}.

Let us show that (c) implies (a). 
Suppose that \(S=R\langle x_1,\dots, x_n
				; \sigma,0\rangle\) is a domain. If
 \(\sigma\) were not megainjective, there would exist \(r\geq 1\) and \(a\in R\setminus\{0\}\) 
such that
the columns of \(\sigma^{(r)}(a)\) are right \(R\)-linearly dependent.
Since \(\delta=0\), \eqref{eq:equivalence} becomes
\[
a\biggl(\sum_{j=1}^{n^r}w_jb_j\biggr)=0,
\quad\text{if, and only if,}\quad
L_1b_1+L_2b_2+\dots+L_{n^r}b_{n^r}=0.
\]
So non-megainjectivity of  \(\sigma\) implies that \(S\) is not a domain. Hence,
(a) follows from (c).
\end{proof}

For a given \(\sigma\)-derivation \(\delta\), 
the filtration \(\{S_m\}_{m\geq 0}\) on the skew free extension
\(S=R\langle x_1,\dots,x_n;\sigma,\delta\rangle\), induced by the degree function
\(\deg\), is such that its associated graded ring is clearly isomorphic to the zero derivation
skew free
extension \(R\langle x_1,\dots,x_n;\sigma,0\rangle\).
Therefore, the fact that (c) implies (a) above can also be obtained as 
a consequence of Theorem~\ref{theo:degreeisadditive} in view of
\cite[Proposition~2.6.1]{pC1995}. This relation between 
\(R\langle x_1,\dots,x_n;\sigma,\delta\rangle\) and \(R\langle x_1,\dots,x_n;\sigma,0\rangle\)
will be explored again in Section~\ref{sec:prime}.

Recall, from Example~\ref{ex:dr}, that if \(R\) is a division ring, then
any ring homomorphism \(\sigma\colon R\to M_n(R)\) will be megainjective. Therefore, by 
Corollary~\ref{cor:sfedom}, any skew free
extension of a division ring is a domain. As stated in \cite[Th.~2.8]{LM2024}, more is
true: any skew free extension of a division ring is, in fact, a free ideal
ring (because it satisfies Cohn's weak algorithm with respect to the degree-function
\(\deg\)) and, as so, has a universal division ring of fractions.

\section{Series}\label{sec:series}

In this section, we exhibit a sufficient condition on the right \(\sigma\)-derivation
\(\delta\) for a \((\sigma,\delta)\)-free skew extension to be embeddable into a
``power series'' ring.

\medskip

Let \(R\) be a ring, let \(n\) be a positive integer, let \(\sigma\colon R\to M_n(R)\) be a ring homomorphism, let \(\delta\colon R\to R^n\) be a right \(\sigma\)-derivation and consider
		the \((\sigma,\delta)\)-free skew extension \(S=R\langle x_1,\dots,x_n;\sigma,\delta
		\rangle\)
of \(R\) generated by \(X=\{x_1,\dots,x_n\}\).
Given \(f\in S\), there are unique \(a_w\in R\), almost all zero, such that
\[
f=\sum_{w\in \langle X\rangle} wa_w.
\]
If \(f\neq 0\), the \emph{order} of \(f\) is defined to be the non-negative integer
\(\ord(f)\) given by
\[
\ord(f) = \min\{\lvert w\rvert \mid a_w\neq 0\}.
\]

Given \(f\in S\), it is clear that, for all \(j=1,\dots,n\), either \(fx_j=0\) or
\(\ord(fx_j)\geq\ord(f)\). It, then, follows by induction on \(\lvert w\rvert\) that
if \(fw\neq 0\) then
\begin{equation}\label{eq:order}
\ord(fw)\geq\ord(f),
\end{equation} 
for all \(w\in \langle X\rangle\).

The main concept of this section is the following.

\begin{definition}
Let \(R\) be a ring, let \(n\) be a positive integer and let \(\sigma\colon R\to M_n(R)\) be a ring homomorphism. A right \(\sigma\)-derivation \(\delta\colon R\to R^n\) is said
to be \emph{locally nilpotent} if for every \(a\in R\), there exists a positive integer
\(p=p(a)\) such that \((\delta_{j_p}\dotsm\delta_{j_1})(a)=0\) for all
\(j_1,\dots,j_p\in\{1,\dots,n\}\).
\end{definition}

The following are examples of locally nilpotent derivations.

\begin{example}
Given a ring homomorphism \(\sigma\colon R\to M_n(R)\) and \(c\in R^n\), the map
\(\delta\colon R\to R^n\), given by \(\delta(a)=ac-c\sigma(a)\), for all \(a\in R\), 
is a right \(\sigma\)-derivation (cf.~\cite[Example~4]{MK2019}), known as the \emph{inner} 
right \(\sigma\)-derivation defined by \(c\).

Suppose that \(R\) has a nilpotent ideal \(I\) such that \(\sigma_{ij}(I)\subseteq I\),
for all \(i,j=1,\dots,n\); moreover suppose that
 \(c=[c_1 \ c_2 \ \dots \ c_n]\in R^n\) is such that 
\(c_i\in I\), for all \(i=1,\dots,n\). Then the inner 
right \(\sigma\)-derivation \(\delta\) defined by \(c\) is locally nilpotent, for, in this
case, \(\sigma_{ij}(I^r)\subseteq I^r\) and, therefore, 
\((\delta_{i_r}\dotsm\delta_{i_1})(a)\in I^r\) for all \(a\in R\) and \(r\geq 1\).
 \end{example}

\begin{example}
Let \(R=k[t_1,\dots, t_n]\) be the polynomial algebra on \(n\) indeterminates over 
a field \(k\) of zero characteristic and let \(\sigma\colon R \to M_n(R)\) be the
scalar ring homomorphism.
The right \(\sigma\)-derivation \(\delta\colon R\to R^n\) such
that \(\delta_j= \frac{\partial}{\partial t_j}\) , for all \(j=1,\dots,n\), is locally nilpotent,
since, for any monomial \(f\in R\) of degree \(m\), \((\delta_{j_{m+1}}\dotsm\delta_{j_1})(f)=0\).
\end{example}

Let \(R\) be a ring, let \(n\) be a positive integer \(n\), let \(\sigma\colon R\to M_n(R)\) 
be a ring homomorphism, let \(\delta\colon R\to R^n\) be a right \(\sigma\)-derivation 
and let \(S=R\langle x_1,\dots,x_n;\sigma,\delta\rangle\) be the \((\sigma,\delta)\)-free skew extension of \(R\) generated by \(X=\{x_1,\dots,x_n\}\). We shall show that 
the ideas contained in \cite[Section 5]{GSZ2019} can be adapted to prove that a free skew 
series ring containing \(S\)
can be constructed provided that \(\delta\) is locally nilpotent.

\begin{lemma}\label{le:ord1}
For all \(a\in R\), \(k\geq 1\) and \(j_1,\dots,j_k\in\{1,\dots,n\}\),
\[
ax_{j_1}\dotsm x_{j_k} \equiv (\delta_{j_k}\dotsm\delta_{j_1})(a) \ \dpmod{I},
\]
where \(I\) denotes the right ideal of \(S\) generated by \(X\).
\end{lemma}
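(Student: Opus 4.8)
The plan is to prove the congruence $ax_{j_1}\dotsm x_{j_k}\equiv(\delta_{j_k}\dotsm\delta_{j_1})(a)\ \dpmod I$ by induction on $k$, pushing the scalar $a$ past the indeterminates one at a time using the commutation rule \eqref{x_ia}, and observing that every term produced except the iterated $\delta$-term lands in the right ideal $I=x_1S+\dots+x_nS$ generated by $X$.

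For the base case $k=1$, the relation \eqref{x_ia} gives $ax_{j_1}=\sum_{i=1}^n x_i\sigma_{ij_1}(a)+\delta_{j_1}(a)$, and since $\sum_{i=1}^n x_i\sigma_{ij_1}(a)\in I$, we get $ax_{j_1}\equiv\delta_{j_1}(a)\ \dpmod I$, as required. For the inductive step, suppose the statement holds for $k-1$. First I would apply \eqref{x_ia} once to move $a$ past $x_{j_1}$:
\[
ax_{j_1}\dotsm x_{j_k}=\Bigl(\sum_{i=1}^n x_i\sigma_{ij_1}(a)+\delta_{j_1}(a)\Bigr)x_{j_2}\dotsm x_{j_k}
=\sum_{i=1}^n x_i\bigl(\sigma_{ij_1}(a)x_{j_2}\dotsm x_{j_k}\bigr)+\delta_{j_1}(a)x_{j_2}\dotsm x_{j_k}.
\]
The sum $\sum_{i=1}^n x_i(\cdots)$ lies in $I$ because each summand starts with some $x_i$. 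For the remaining term $\delta_{j_1}(a)x_{j_2}\dotsm x_{j_k}$, apply the induction hypothesis with the scalar $\delta_{j_1}(a)$ in place of $a$ and the string $j_2,\dots,j_k$ of length $k-1$: this gives $\delta_{j_1}(a)x_{j_2}\dotsm x_{j_k}\equiv(\delta_{j_k}\dotsm\delta_{j_2})(\delta_{j_1}(a))\ \dpmod I=(\delta_{j_k}\dotsm\delta_{j_1})(a)\ \dpmod I$. Combining, $ax_{j_1}\dotsm x_{j_k}\equiv(\delta_{j_k}\dotsm\delta_{j_1})(a)\ \dpmod I$, completing the induction.

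I do not anticipate a genuine obstacle here; the only point requiring a moment's care is keeping track of the order in which the $\delta_j$'s compose—the induction naturally produces $(\delta_{j_k}\dotsm\delta_{j_2})\circ\delta_{j_1}$, which must be recognized as $\delta_{j_k}\dotsm\delta_{j_1}$ applied to $a$—and the trivial but essential observation that $I$ is a \emph{right} ideal, so that an element of the form $x_i s$ with $s\in S$ stays in $I$ after being right-multiplied by nothing further, and so that the already-in-$I$ terms absorb whatever follows. No other results beyond \eqref{x_ia} and the free right $R$-module structure of $S$ are needed.
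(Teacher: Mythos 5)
Your proof is correct and is exactly the argument the paper intends: the paper's proof consists of the single line ``By induction on $k$, the base case being the commutation rule,'' and your write-up simply supplies the details of that induction (pushing $a$ past $x_{j_1}$, absorbing the $\sum_i x_i(\cdots)$ terms into the right ideal $I$, and applying the inductive hypothesis to $\delta_{j_1}(a)$). Nothing further to add.
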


\begin{proof} By induction on \(k\), the base case being \eqref{x_ia}.
\end{proof}

\begin{lemma}\label{le:ord2}
Suppose \(\delta\) is locally nilpotent and let \(a\in R\). Then, for every
\(q\geq 0\), there exists a positive integer \(N_q= N_q(a)\) such that for
all \(w\in \langle X\rangle\) with \(\lvert w\rvert\geq N_q\), either
\(aw=0\) or \(\ord(aw)>q\).
\end{lemma}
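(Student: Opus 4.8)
The plan is to induct on $q$. For the base case $q=0$, I want to show that there is an $N_0$ so that every $w\in\langle X\rangle$ of length $\geq N_0$ satisfies $aw=0$ or $\ord(aw)>0$, i.e.\ $aw$ has no constant term. By local nilpotence, pick $p=p(a)$ with $(\delta_{j_p}\dotsm\delta_{j_1})(a)=0$ for all index strings of length $p$; set $N_0=p$. Given $w=x_{j_1}\dotsm x_{j_k}$ with $k\geq p$, Lemma~\ref{le:ord1} gives $aw\equiv(\delta_{j_k}\dotsm\delta_{j_1})(a)\pmod{I}$, where $I$ is the right ideal generated by $X$. Since $k\geq p$, the leftmost $p$ applications of the $\delta$'s already annihilate $a$ (more precisely: writing $aw = a x_{j_1}\dotsm x_{j_k}$, the first $p$ steps produce $(\delta_{j_p}\dotsm\delta_{j_1})(a)=0$ and the $\sigma$-terms, which all lie in $S_1\cdot(\text{rest})\subseteq I$ once multiplied by $x$'s; so the constant-term part vanishes). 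Hence $aw\in I$, which is exactly the statement $aw=0$ or $\ord(aw)>0$.

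For the inductive step, assume $N_q(a)$ exists for every $a\in R$ (the statement is quantified over all $a$, which is what makes the induction go through). I want $N_{q+1}(a)$. The idea: a monomial $w$ of large length can be split as $w=w'w''$ with $|w'|$ a fixed large number and $|w''|$ also large. Move $a$ past $w'$ first: by Lemma~\ref{lem:scalartimesword}-type manipulation (or directly by \eqref{x_ia} and induction on $|w'|$), $aw' = \sum_v v\,c_v + g$ where $v$ ranges over monomials of length $|w'|$, the $c_v$ are obtained from $a$ by alternately applying $\sigma_{ij}$'s and $\delta_j$'s, and $g\in S_{|w'|-1}$ collects lower-degree correction terms coming from the $\delta$'s. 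Then $aw = \bigl(\sum_v v c_v\bigr)w'' + g\,w''$. For the first piece, each $v c_v w'' = v\,(c_v w'')$; choosing $|w''|$ at least $\max_v N_q(c_v)$ (a finite max, since only finitely many $c_v$ arise from a fixed $a$ and fixed $|w'|$) forces each $c_v w''$ to be $0$ or of order $>q$, so $v(c_v w'')$ is $0$ or of order $>q+|v|\geq q+1$. For the second piece $g\,w''$: $g$ involves strictly fewer $\sigma$/$\delta$ applications, and one arranges $|w'|$ large enough (using the base-case phenomenon and local nilpotence) that $g$ is already a combination of $x$-monomials times scalars killed further down, so that $\ord(g w'')$ is also large.

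The main obstacle is controlling the correction term $g$: it is a sum over many monomials of various lengths $<|w'|$, each with a scalar coefficient that is itself a word in the $\delta_j$'s and $\sigma_{ij}$'s applied to $a$. I expect to handle this by a double bookkeeping — track simultaneously the number of $\delta$-applications (which is bounded by local nilpotence once we demand enough total length) and the length of the $x$-word attached — and to choose $|w'|$ so large that every surviving summand of $g$ has either vanished (too many $\delta$'s) or has enough $x$'s on its left that, after appending $w''$ of length $\geq N_q$ of its coefficient, the order exceeds $q$. Formally I would set up an auxiliary quantity like ``$q$-admissible length'' and prove by induction on it; the cleanest route is probably to iterate Lemma~\ref{le:ord1} and the base case: split $w$ into consecutive blocks each of length $N_0$ of the relevant coefficient, so that passing $a$ through one block either kills it or strictly raises its order, then feed the finitely many resulting coefficients into the induction hypothesis $N_q$. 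Taking $N_{q+1}(a)=N_0(a)+\max\{N_q(c)\mid c\in C(a)\}$ for the (finite) set $C(a)$ of coefficients produced works, and the verification that nothing of order $\leq q$ leaks through is the computational heart.
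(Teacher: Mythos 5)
Your final paragraph is the correct argument and it is essentially the paper's proof with the two blocks in the opposite order: the paper factors \(u=u_1u_2\) with \(\lvert u_1\rvert=N_q\) (so the induction hypothesis forces \(\lvert v\rvert>q\) for every surviving monomial \(v\) of \(au_1\)) and then uses local nilpotence of the finitely many coefficients \(c_v^{u_1}\) to choose \(\lvert u_2\rvert=p\) gaining one more unit of order via Lemma~\ref{le:ord1}, whereas you put the \(N_0\)-block first (forcing \(\lvert v\rvert\geq 1\)) and then feed the finitely many coefficients of \(aw'\) into the induction hypothesis; either order of composition yields \(\ord(aw)>q+1\), and your \(N_{q+1}(a)=N_0(a)+\max\{N_q(c)\mid c\in C(a)\}\) works. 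The one thing to discard is the middle paragraph's worry about the correction term \(g\in S_{\lvert w'\rvert-1}\): there is no need to separate the top-degree part of \(aw'\) from the rest, since the only facts used are that the coefficient of the empty word in \(aw'\) vanishes (the base case) and that \emph{all} coefficients of \(aw'\), whatever the length of their attached monomial, lie in a finite set to which the induction hypothesis applies; with that observation the ``computational heart'' you defer is a two-line verification.
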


\begin{proof}
In view of \eqref{eq:order}, it is sufficient to prove that the condition 
holds for \(w\in \langle X\rangle\) with
\(\lvert w\rvert = N_q\).

The integers \(N_q\) are constructed inductively. For \(q=0\), take \(N_0\) such
that \((\delta_{j_{N_0}}\dotsm \delta_{j_1})(a)=0\), for all \(j_1,\dots, j_{N_0} \in
\{1,\dots, n\}\). Given \(w\in \langle X\rangle\)
with \(\lvert w\rvert=N_0\) and \(aw\neq 0\), it follows from Lemma~\ref{le:ord1} that
\(\ord(aw)>0\).

Suppose that \(N_q\) has been defined. For each \(w\in \langle X\rangle_{N_q}\), write
\[
aw = \sum_{v\in\langle X\rangle}vc_v^w,
\]
with \(c_v^w\in R\). Since there are only finitely many nonzero elements in the
set \(\{c_v^w \mid v\in\langle X\rangle, w\in\langle X\rangle_{N_q}\}\)
there exists \(p\geq 1\) such that
\((\delta_{j_p}\dotsm\delta_{j_1})(c_v^w)=0\) for all \(v\in\langle X\rangle\), 
\(w\in \langle X\rangle_{N_q}\) and \(j_1,\ldots,j_p \in \{1,\ldots,n\}\). Let \(N_{q+1}=p+N_q\). Take \(u\in \langle X\rangle\)
with \(\lvert u\rvert=N_{q+1}\) such that \(au\neq 0\). Write \(u=u_1u_2\) with
\(u_1\in \langle X\rangle_{N_q}\) and \(u_2\in\langle X\rangle_p\). Then,
\[
au = (au_1)u_2= \sum_{v\in\langle X\rangle} vc_v^{u_1}u_2.
\]
For each \(v\) such that \(c_v^{u_1}u_2\neq 0\), we have, on the one hand, that
\(\lvert v\rvert>q\), by induction hypothesis, and, on the other hand, that
\(\ord(c_v^{u_1}u_2)\geq 1\), by Lemma~\ref{le:ord1}. It follows that
\(\ord(au)>q+1\).
\end{proof}

Now let \(R\langle\langle x_1,\dots,x_n;\sigma,\delta\rangle\rangle\) denote the
set of all formal sums
\[
\sum_{w\in\langle X\rangle} wa_w,
\]
with \(a_w\in R\), possibly infinitely many nonzero. This set contains the
skew free extension \(S=R\langle x_1,\dots,x_n;\sigma,\delta\rangle\) and has a natural
additive abelian group structure extending the one in \(S\). The next result shows
that \(R\langle\langle x_1,\dots,x_n;\sigma,\delta\rangle\rangle\) can be naturally
made into a ring if \(\delta\) is locally nilpotent.

\begin{theorem}
Let \(R\) be a ring, let \(n\) be a positive integer, let \(\sigma\colon R\to M_n(R)\) 
be a ring homomorphism, let \(\delta\colon R\to R^n\) be a right \(\sigma\)-derivation 
and let \(S=R\langle x_1,\dots,x_n;\sigma,\delta\rangle\) be the \((\sigma,\delta)\)-free skew extension of \(R\) generated by \(x_1,\dots,x_n\). If \(\delta\) is locally
nilpotent, then the natural extension of the multiplication from \(S\) to 
\(R\langle\langle x_1,\dots,x_n;\sigma,\delta\rangle\rangle\) is well-defined.
\end{theorem}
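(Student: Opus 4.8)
The plan is to show that, for $f=\sum_{w\in\langle X\rangle}wa_w$ and $g=\sum_{v\in\langle X\rangle}vb_v$ in $R\langle\langle x_1,\dots,x_n;\sigma,\delta\rangle\rangle$, the naive product
\[
fg=\sum_{w,v\in\langle X\rangle}w(a_wv)b_v,
\]
where each summand $w(a_wv)b_v$ is computed inside $S$ (a finite sum), determines a well-defined element of $R\langle\langle x_1,\dots,x_n;\sigma,\delta\rangle\rangle$; that is, for every fixed $u\in\langle X\rangle$ only finitely many pairs $(w,v)$ contribute a nonzero coefficient to the basis element $u$, so that the coefficient of $u$ in $fg$ is a finite sum of elements of $R$.

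First I would isolate the shape of $w(a_wv)b_v$ in the right $R$-basis $\langle X\rangle$. Writing $a_wv=\sum_{v'}v'c_{v'}$ in $S$ — a finite sum with $c_{v'}=0$ unless $\lvert v'\rvert\le\lvert v\rvert$ — and using that $\langle X\rangle$ is a free monoid, so that $v'\mapsto wv'$ is injective, one gets $w(a_wv)b_v=\sum_{v'}(wv')(c_{v'}b_v)$. Hence the only basis elements occurring are words having $w$ as a \emph{prefix}, and the coefficient of such a word $wv'$ is $c_{v'}b_v$, where $c_{v'}$ is the coefficient of $v'$ in $a_wv\in S$. Consequently, for a fixed target word $u$, a pair $(w,v)$ can contribute only if $w$ is one of the $\lvert u\rvert+1$ prefixes of $u$; and for each such $w$, left cancellation in $\langle X\rangle$ shows the unique relevant $v'$ is the suffix $u'$ determined by $u=wu'$, so the contribution of $(w,v)$ to the coefficient of $u$ is (coefficient of $u'$ in $a_wv$)\,$b_v$.

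The key step is then to bound the relevant $v$'s by means of local nilpotence. Fixing a prefix $w$ of $u$ and setting $q=\lvert u'\rvert=\lvert u\rvert-\lvert w\rvert$, Lemma~\ref{le:ord2} applied to $a_w$ furnishes an integer $N_q=N_q(a_w)$ such that for every $v\in\langle X\rangle$ with $\lvert v\rvert\ge N_q$ one has $a_wv=0$ or $\ord(a_wv)>q$; in either case the coefficient of the length-$q$ word $u'$ in $a_wv$ is $0$, so $(w,v)$ does not contribute. Thus only the finitely many $v$ with $\lvert v\rvert<N_q(a_w)$ matter for this $w$, and summing over the $\lvert u\rvert+1$ prefixes $w$ of $u$ shows that the coefficient of $u$ in $fg$ is a finite sum in $R$, hence well-defined. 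This is the crux of the theorem, and the only place local nilpotence (via Lemma~\ref{le:ord2}) is used; I expect it to be the main obstacle, and it is genuinely needed, since otherwise even the constant term of $fg$ can be an infinite sum (for instance when some $\delta_j$ is surjective, the constant term of $\sum_k a x_{j}^{\,k}$ involves $\sum_k(\delta_j^{\,k})(a)$ by Lemma~\ref{le:ord1}).

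Finally I would note the easy complements. When $f,g\in S$ the double sum defining $fg$ is already finite and coincides with the product computed in $S$, so the operation genuinely extends the multiplication of $S$; bilinearity over the additive group of $R\langle\langle x_1,\dots,x_n;\sigma,\delta\rangle\rangle$ is immediate from the coefficientwise formula. (Associativity — needed to conclude that $R\langle\langle x_1,\dots,x_n;\sigma,\delta\rangle\rangle$ is a ring — is a separate verification carried out by comparing, for each $u\in\langle X\rangle$, the coefficients of $u$ in $(fg)h$ and $f(gh)$: by the finiteness just established each side is a finite sum, and the identity reduces to associativity in $S$.)
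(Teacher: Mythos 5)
Your proof is correct and follows essentially the same route as the paper's: both arguments fix a target word \(u\), discard the terms whose first-factor word is too long to reach \(u\), and then invoke Lemma~\ref{le:ord2} to bound the length of the admissible second-factor words, so that each coefficient of the product is a finite sum in \(R\). Your observation that the first-factor word must actually be a \emph{prefix} of \(u\) (rather than merely of length \(\leq\lvert u\rvert\), as the paper uses) is a harmless sharpening, and your closing remark that associativity requires a separate (routine) check is a fair point the paper leaves implicit.
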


\begin{proof}
Take \(f,g\in R\langle\langle x_1,\dots,x_n;\sigma,\delta\rangle\rangle\), say
\(f=\sum_{v\in\langle X\rangle}vb_v\) and \(g=\sum_{w\in \langle X\rangle}wc_w\),
with \(b_v,c_w\in R\). We want to show that \(fg = \sum_{u\in \langle X\rangle}ua_u\),
where, for each \(u\in \langle X\rangle\), \(a_u\) is an element of \(R\), obtained by
taking (finite) sums of products of elements of \(R\).

Fix \(u\in\langle X\rangle\) with \(\lvert u\rvert = q\) and let us consider the
contribution to \(a_u\) coming from each term of the form \(vb_vwc_w\). If
\(vb_vwc_w = 0\) for all \(v,w\in\langle X\rangle\), then \(a_u=0\). Otherwise,
let \(v,w\in\langle X\rangle\) be such that \(vb_vwc_w\neq 0\). On the one hand,
if \(\lvert v\rvert > q\), then \(\ord(vb_vwc_w)>q\) and, hence, this term
does not contribute to \(a_u\). On the other hand, if \(\lvert w\rvert \geq N_q(b_v)\),
then \(\ord(b_vw)>q\), by Lemma~\ref{le:ord2}; so \(\ord(vb_vwc_w)>q\) and, again,
\(vb_vwc_w\) does not contribute to \(a_u\). Hence, the only terms of the form
\(vb_vwc_w\) that can, eventually, contribute to \(a_u\) are the ones 
with \(\lvert v\rvert \leq q\)
and \(\lvert w\rvert<s(q) = \max\{N_q(b_v) \mid v\in \langle X\rangle, \lvert v\rvert\leq q\}\). 
It follows
that \(a_u\) coincides with the coefficient of \(u\) in the following element of \(S\):
\[
\sum_{\substack{v,w\in \langle X \rangle\\ 
\lvert v\rvert\leq q, \lvert w\rvert < s(q)}} vb_vwc_w.
\]
Therefore, \(a_u\in R\).
\end{proof}

\section{Primeness}\label{sec:prime}

Here we shall show that a condition analogous to the one described in
\cite[Theorem~4.4]{LLM1997} for a skew polynomial ring to be prime holds in the
case of ``triangular'' skew free extensions.

\medskip

Let \(R\) be a ring and let \(n\) be a positive integer. Throughout this section we will 
focus exclusively on \emph{upper triangular} ring homomorphisms \(\sigma\colon R\to M_n(R)\),
that is, homomorphisms for which \(\sigma_{ij}\colon R\to R\) are zero maps for all
\(i>j\). In this case, it easy to see that \(\sigma_{ii}\) are ring endomorphisms of
\(R\), for all \(i=1,\dots,n\). Also, if \(\delta\colon R\to R^n\) is a right 
\(\sigma\)-derivation, then in \(S=R\langle x_1,\dots,x_n;\sigma,\delta\rangle\), one has
\[
ax_j = \sum_{i=1}^jx_i\sigma_{ij}(a) + \delta_j(a),
\]
for all \(j=1,\dots,n\) and \(a\in R\).

We start by defining a total ordering on the set \(\langle X\rangle\) of all monomials
on \(X=\{x_1,\dots,x_n\}\) that extends the ordering on \(\langle X\rangle_r\) used
in Section~\ref{sec:domain}. Once fixed that \(x_1<x_2<\dots<x_n\), let
\(u,v\in \langle X\rangle\setminus\{1\}\) and define \(u<v\) if \(\lvert u\rvert < 
\lvert v\rvert\) or \(\lvert u\rvert =\lvert v\rvert\) and \(u\) precedes \(v\) lexicographically,
that is, \(u=x_{i_1}\dotsm x_{i_r}, v=x_{j_1}\dotsm x_{j_r}\) and \(x_{i_1}<x_{j_1}\) or 
there exists \(s\in\{2,\dots,r\}\) such that
\(x_{i_1}=x_{j_1}, x_{i_2}=x_{j_2},\dots, x_{i_{s-1}}=x_{j_{s-1}}\) and \(x_{i_s}<x_{j_s}\).
Note that since \(\lvert 1\rvert = 0\), we have \(1<w\), for all \(w\in\langle X\rangle\),
\(w\neq 1\). Because for every positive integer \(n\) there are only finitely many
monomials of length \(n\), the set \(\langle X\rangle\) is, in fact, well-ordered by \(<\):
the minimal element of a nonempty subset \(U\) of \(\langle X\rangle\) is the smallest
monomial with minimal length in \(U\).
Moreover, \(<\) is a monomial ordering in the sense that if \(u<v\), then
\(w_1uw_2 < w_1vw_2\) for all \(w_1,w_2\in\langle X\rangle\).

For every \(w\in\langle X\rangle\), let \(S_{<w}\) be the right \(R\)-submodule of
\(S\) generated by \(\{v\in\langle X\rangle \mid v < w\}\). It is clear that
\(uS_{<w}\subseteq S_{<uw}\) for all \(u,w\in\langle X\rangle\). When \(\sigma\) is
upper triangular, we get a symmetric inclusion, as we shall see in Lemma~\ref{le:prime2}.

Given \(0\neq f\in S\), say \(f=\sum_{w\in\langle X\rangle} wa_w\), with \(a_w\in R\),
the monomial \(\max\{w\in\langle X\rangle \mid a_w\neq 0\}\) is called the \emph{leading monomial} 
of \(f\). And, of course, if \(w\) is the leading monomial of \(f\), then we can
write \(f=wa+g\), with \(a\in R\setminus\{0\}\) and \(g\in S_{<w}\).

The following is a refinement of Lemma~\ref{lem:scalartimesword} for the case of
an upper triangular \(\sigma\).

\begin{lemma}\label{le:prime1}
Suppose that \(\sigma\) is upper triangular and let \(a\in R\), \(w\in\langle X\rangle\).
Then,
\[
aw \equiv w\sigma_w(a) \ \dpmod{S_{<w}},
\]
where if \(w=x_{j_1}\dotsm x_{j_r}\) then \(\sigma_w = \sigma_{j_rj_r}\dotsm
\sigma_{j_2j_2}\sigma_{j_1j_1}\), and \(\sigma_1=\Id\).
\end{lemma}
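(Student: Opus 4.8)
The plan is to prove Lemma~\ref{le:prime1} by induction on the length \(r = \lvert w\rvert\), paralleling the inductive structure already used in Lemma~\ref{lem:scalartimesword} but now exploiting upper triangularity of \(\sigma\) to collapse the sum \(\sum_{i=1}^j x_i\sigma_{ij}(a)\) down to its single ``diagonal'' surviving term modulo \(S_{<w}\). The base case \(r=0\) is trivial since \(\sigma_1 = \Id\) and \(S_{<1} = 0\) (there is no monomial strictly below \(1\)); the case \(r=1\) with \(w = x_j\) is exactly the commutation rule \(ax_j = \sum_{i=1}^j x_i\sigma_{ij}(a) + \delta_j(a)\), where every term \(x_i\sigma_{ij}(a)\) with \(i<j\) lies in \(S_{<x_j}\) (since \(x_i < x_j\)) and \(\delta_j(a) \in R = S_{<x_j}\) as well, leaving \(ax_j \equiv x_j\sigma_{jj}(a) \pmod{S_{<x_j}}\).

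For the inductive step, write \(w = w' x_{j_r}\) with \(w' = x_{j_1}\dotsm x_{j_{r-1}}\), so \(\sigma_w = \sigma_{j_r j_r}\sigma_{w'}\). First I would apply the induction hypothesis to get \(aw' = w'\sigma_{w'}(a) + g\) for some \(g \in S_{<w'}\), hence \(aw = w'\sigma_{w'}(a)x_{j_r} + g x_{j_r}\). The term \(g x_{j_r}\) lies in \(S_{<w' x_{j_r}} = S_{<w}\) because \(g \in S_{<w'}\) and right multiplication by \(x_{j_r}\) sends \(S_{<w'}\) into \(S_{<w'x_{j_r}}\) (the containment \(u S_{<v} \subseteq S_{<uv}\) — or here its obvious one-sided analogue for right multiplication by a single generator — follows directly from the fact that \(<\) is a monomial ordering, so \(v<w'\) implies \(vx_{j_r} < w'x_{j_r}\)). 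Then I expand \(w'\sigma_{w'}(a)x_{j_r}\) using the commutation rule on the scalar \(b := \sigma_{w'}(a)\): namely \(b x_{j_r} = \sum_{i=1}^{j_r} x_i \sigma_{i j_r}(b) + \delta_{j_r}(b)\), giving \(w' b x_{j_r} = \sum_{i=1}^{j_r} w' x_i \sigma_{i j_r}(b) + w'\delta_{j_r}(b)\).

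It remains to argue that every term on the right except \(w' x_{j_r}\sigma_{j_r j_r}(b) = w\,\sigma_w(a)\) lies in \(S_{<w}\). For \(i < j_r\): the monomial \(w' x_i\) has the same length \(r\) as \(w = w' x_{j_r}\) and shares the prefix \(w'\), so \(w' x_i < w' x_{j_r} = w\) in the lexicographic-by-length ordering; hence \(w' x_i \sigma_{i j_r}(b) \in S_{<w}\). For the \(\delta\)-term: \(w' \delta_{j_r}(b)\) is a right \(R\)-linear combination of monomials of length \(r-1\), all of which are \(< w\) (length \(r-1 < r\)), so \(w'\delta_{j_r}(b) \in S_{<w}\). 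Collecting everything, \(aw \equiv w\,\sigma_w(a) \pmod{S_{<w}}\), completing the induction. The main obstacle — though a mild one — is being careful with the bookkeeping of which monomials fall into \(S_{<w}\): the key points are that \(S_{<w}\) is closed under the relevant one-sided multiplications (a consequence of \(<\) being a monomial ordering, already noted in the text) and that strictly-shorter monomials are automatically below \(w\); everything else is the routine commutation computation already rehearsed in Lemma~\ref{lem:scalartimesword}.
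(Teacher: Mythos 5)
Your proof is correct and follows essentially the same route as the paper's: induction on the length of \(w\), peeling off the last generator, applying the inductive hypothesis to the prefix \(w'\), and then commuting the resulting coefficient past \(x_{j_r}\), with upper triangularity ensuring that the only surviving term modulo \(S_{<w}\) is \(w\sigma_w(a)\). The one place you are slightly terse --- the containment \(S_{<w'}x_{j_r}\subseteq S_{<w'x_{j_r}}\) --- does require expanding each \(b_vx_{j_r}\) via the commutation rule and checking that every resulting monomial \(vx_i\) (for any \(i\)) as well as \(v\) itself is still below \(w'x_{j_r}\) in the graded-lexicographic order, which is exactly the expansion the paper writes out explicitly in the displayed formula of its inductive step.
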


\begin{proof}
The result is clearly valid for \(w=1\). Let \(w=x_{j_1}\dotsm x_{j_r}\), with \(r\geq 1\).
We proceed by induction on \(r\). For \(r=1\), since \(\sigma\) is upper triangular, we have
\[
ax_{j_1} = \sum_{i=1}^{j_1}x_i\sigma_{ij_1}(a) + \delta_{j_1}(a) = 
x_{j_1}\sigma_{j_1j_1}(a) + \sum_{i=1}^{j_1-1}x_i\sigma_{ij_1}(a) + \delta_{j_1}(a).
\]
So, \(ax_{j_1}-x_{j_1}\sigma_{x_{j_1}}(a)\in S_{<x_{j_1}}\).
Suppose the result valid for \(r\geq 1\) and let \(w=x_{j_1}\dotsm x_{j_r}\).
Then, there exists \(\sum_{v<w}
vb_v\in S_{<w}\), with \(b_v\in R\), such that
\begin{multline*}
awx_{j_{r+1}} = wx_{j_{r+1}} \sigma_{wx_{j_{r+1}}}(a)  + \sum_{i=1}^{j_{r+1}-1}
wx_i\sigma_{ij_{r+1}}(\sigma_w(a)) \\ + w\delta_{j_{r+1}}(\sigma_w(a)) + 
\sum_{v<w}\sum_{i=1}^{j_{r+1}} vx_i\sigma_{ij_{r+1}}(b_v) + \sum_{v<w}v\delta_{j_{r+1}}(b_v).
\end{multline*}
So, indeed, \(awx_{j_{r+1}} - wx_{j_{r+1}} \sigma_{wx_{j_{r+1}}}(a) \in S_{<wx_{j_{r+1}}}\).
\end{proof}

\begin{lemma}\label{le:prime2}
Suppose \(\sigma\) is upper triangular, and let \(v,w\in\langle X\rangle\). Then 
\renewcommand{\labelenumi}{(\roman{enumi})}
\begin{enumerate}
\item \(RS_{<w}\subseteq S_{<w}\),
\item \(S_{<v}w\subseteq S_{<vw}\),
\item \(S_{<v}S_{<w}\subseteq S_{<vw}\).
\end{enumerate}
\end{lemma}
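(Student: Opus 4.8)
To prove Lemma~\ref{le:prime2}, I would establish the three inclusions in the order (i), then (ii), then (iii), since (iii) follows formally from (i) and (ii). For (i), the point is that $S_{<w}$ is spanned over $R$ on the \emph{right} by monomials $v<w$, so I must show that left-multiplication by an element $a\in R$ sends each such $v$ back into $S_{<w}$. By Lemma~\ref{le:prime1}, $av\equiv v\sigma_v(a)\pmod{S_{<v}}$, so $av\in vR+S_{<v}\subseteq S_{<w}$, using $v<w$ and the obvious fact that $S_{<v}\subseteq S_{<w}$. That handles (i) directly.

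For (ii), I want $uw'\in S_{<vw}$ whenever $u<v$ and $w'\in\langle X\rangle$ with $|w'|\le|w|$ — or more precisely I should argue on the generating monomials $u<v$ of $S_{<v}$ and show $uw\in S_{<vw}$, then extend $R$-linearly on the right. The trivial half is already noted before the lemma: $uw<vw$ when $u<v$ because $<$ is a monomial ordering (compatible with right multiplication), so $uw\in S_{<vw}$ immediately. So (ii) is actually immediate from the monomial-ordering property and does not even need upper-triangularity. (I should double-check whether the hypothesis on $\sigma$ is genuinely needed here; I suspect the role of upper-triangularity is really in Lemma~\ref{le:prime1}, which feeds (i), and (ii)--(iii) are formal.)

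For (iii), take a generating monomial $u<v$ of $S_{<v}$ and an arbitrary element $g\in S_{<w}$. Then $ug\in uS_{<w}\subseteq S_{<uw}\subseteq S_{<vw}$, where the first inclusion is the one stated as "clear" just before the lemma ($uS_{<w}\subseteq S_{<uw}$ since $<$ respects left multiplication by monomials) and the second is because $u<v$ forces $uw<vw$, hence $S_{<uw}\subseteq S_{<vw}$. Since $S_{<v}$ is the sum of the $uR$ over $u<v$ and left multiplication by elements of $R$ is handled by nothing extra here (we are multiplying $S_{<v}$ by $S_{<w}$, not by $R$), this gives $S_{<v}S_{<w}\subseteq S_{<vw}$ after summing. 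Alternatively (iii) is just (i) applied to absorb scalars plus (ii): write $f\in S_{<v}$ as $\sum_{u<v}ua_u$, then $fg=\sum_{u<v}u(a_ug)$, and $a_ug\in RS_{<w}\subseteq S_{<w}$ by (i), so $u(a_ug)\in uS_{<w}\subseteq S_{<uw}\subseteq S_{<vw}$ by (ii) applied to the monomial $u$ (or by the pre-lemma remark).

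\textbf{Main obstacle.} I expect the only real content to be in part (i): it is the single place where upper-triangularity of $\sigma$ is used, via Lemma~\ref{le:prime1}, which guarantees that $av$ stays "below $vR$" rather than spreading into higher monomials — without triangularity, $av$ could involve $x_i v'$ terms with $i$ exceeding the first letter of $v$, pushing it outside $S_{<w}$. Parts (ii) and (iii) are then purely formal consequences of the monomial-ordering axiom and the remark $uS_{<w}\subseteq S_{<uw}$ recorded just before the lemma, so the proof should be short; the care needed is simply to reduce everything to the spanning monomials and invoke Lemma~\ref{le:prime1} at the one nontrivial step.
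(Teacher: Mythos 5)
Parts (i) and (iii) of your proposal are essentially the paper's argument: for (i) you reduce to \(av\equiv v\sigma_v(a)\pmod{S_{<v}}\) via Lemma~\ref{le:prime1} and absorb the right coefficients, and your ``alternative'' proof of (iii) --- write \(f=\sum_{u<v}ua_u\), use (i) to get \(a_ug\in S_{<w}\), then \(u(a_ug)\in uS_{<w}\subseteq S_{<uw}\subseteq S_{<vw}\) --- is correct (your first version of (iii), which claims no scalars need absorbing, is confused, but the alternative fixes it).

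The genuine gap is in (ii). You reduce \(S_{<v}w\subseteq S_{<vw}\) to showing \(uw<vw\) for monomials \(u<v\) and then ``extend \(R\)-linearly on the right,'' concluding that (ii) is immediate and needs no hypothesis on \(\sigma\). But \(S_{<v}\) is the \emph{right} \(R\)-module spanned by the \(u<v\), so a general element is \(g=\sum_{u<v}ud_u\) with the coefficients \(d_u\) sandwiched between \(u\) and \(w\) in the product \(gw=\sum_u u(d_uw)\). Right \(R\)-linearity does not apply: you must expand \(d_uw\), which is exactly the same phenomenon you correctly flagged as ``the only real content'' in (i). The paper does this via Lemma~\ref{le:prime1} again, writing \(d_uw=w\sigma_w(d_u)+h_u\) with \(h_u\in S_{<w}\), so that \(u(d_uw)=uw\sigma_w(d_u)+uh_u\in S_{<vw}\); thus upper-triangularity enters (ii) through the same lemma, contrary to your claim. (One \emph{could} avoid Lemma~\ref{le:prime1} here by invoking only \(d_uw\in\sum_{\lvert m\rvert\le\lvert w\rvert}mR\) together with the observation that \(u<v\) and \(\lvert m\rvert\le\lvert w\rvert\) force \(um<vw\) under the length-then-lexicographic order --- your opening sentence of (ii) gestures at this --- but you never carry that argument out, instead retreating to the invalid ``generating monomials plus right linearity'' reduction.)
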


\begin{proof}
For (i), let \(a\in R\) and \(f=\sum_{u<w}ub_u\in S_{<w}\), with \(b_u\in R\). Then,
for each \(u\in\langle X\rangle\), by Lemma~\ref{le:prime1}, there exists \(f_u\in S_{<u}\)
such that \(au=u\sigma_u(a)+f_u\). Hence,
\[
af = \sum_{u<w}u\sigma_u(a)b_u + \sum_{u<w}f_ub_u\in S_{<w}.
\]

For (ii), let \(g=\sum_{u<v}ud_u\in S_{<v}\), with \(d_u\in R\). Then, 
for each \(u\in\langle X\rangle\) with \(u<v\), by Lemma~\ref{le:prime1},
there exists \(h_u\in S_{<w}\) such that \(d_uw = w\sigma_w(d_u)+h_u\).
Then,
\[
gw = \sum_{u<v} uw\sigma_w(d_u) + \sum_{u<v} uh_u.
\]
Since \(u<v\) implies \(uw<vw\), we have \(uh_u\in uS_{<w}\subseteq S_{<uw}\subseteq S_{<vw}\).
So, \(gw\in S_{<vw}\).

Inclusion (iii) follows from (ii).
\end{proof}

We now show that a result analogous to \cite[Lemma~4.2]{LLM1997} holds in this
more general setting. A word on notation, given a ring \(R\) and a subset
\(U\) of \(R\), we denote the \emph{right (respectively, left) annihilator} of \(U\) in \(R\) by
\(\rann_R(U) = \{z\in R \mid Uz=0\}\) (respectively, \(\lann_R(U)=\{z\in R\mid zU=0\}\)).

\begin{lemma}\label{le:prime3}
Suppose that \(\sigma\) is upper triangular and that \(\sigma_{ii}\in\Aut(R)\) for
all \(i=1,\dots,n\). Let \(p=wa + f\in S\), with \(w\in\langle 
X\rangle\), \(a\in R\)  and \(f\in S_{<w}\).
\renewcommand{\labelenumi}{(\roman{enumi})}
\begin{enumerate}
    \item If \(\rann_R(a)\subseteq \rann_R(p)\), then \(\rann_S(p)\subseteq \rann_S(a)\).
		\item If \(\lann_R\bigl(\sigma_w^{-1}(a)\bigr) \subseteq \lann_R(p)\), then
		  \(\lann_S(p)\subseteq \lann_S(\sigma_w^{-1}(a))\).
\end{enumerate}
\end{lemma}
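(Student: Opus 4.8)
The plan is to prove both (i) and (ii) by well-founded induction on the leading monomial of an element of the annihilator in question, using that $<$ is a well-ordering on $\langle X\rangle$ together with Lemma~\ref{le:prime1}, the inclusion $uS_{<v}\subseteq S_{<uv}$, and Lemma~\ref{le:prime2}. Since (i) and (ii) are mirror images, I describe (i) in detail. Let $z\in\rann_S(p)$; I claim $az=0$, arguing by induction on the leading monomial of $z$, with $z=0$ vacuous. Write $z=vc+z'$ with $v$ the leading monomial of $z$, $c\in R$ and $z'\in S_{<v}$. The first step is to compute $pz=(wa+f)(vc+z')$ modulo $S_{<wv}$: expanding into the four products $wa\cdot vc$, $wa\cdot z'$, $f\cdot vc$, $f\cdot z'$ and applying Lemma~\ref{le:prime1} in the form $av\equiv v\sigma_v(a)\pmod{S_{<v}}$ together with $RS_{<v}\subseteq S_{<v}$, $S_{<w}v\subseteq S_{<wv}$, $uS_{<v}\subseteq S_{<uv}$ and $S_{<w}S_{<v}\subseteq S_{<wv}$, every contribution except $wv\,\sigma_v(a)c$ lands in $S_{<wv}$. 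Since $S$ is free over $R$ on $\langle X\rangle$ and $wv$ does not occur in $S_{<wv}$, the hypothesis $pz=0$ forces $\sigma_v(a)c=0$.

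For the second step I use the standing hypothesis that each $\sigma_{ii}$ is an automorphism of $R$: writing $v=x_{j_1}\dotsm x_{j_r}$, the map $\sigma_v=\sigma_{j_rj_r}\dotsm\sigma_{j_1j_1}$ is an automorphism of $R$, so $\sigma_v(a)c=0$ gives $a\sigma_v^{-1}(c)=0$, i.e.\ $\sigma_v^{-1}(c)\in\rann_R(a)\subseteq\rann_R(p)$, whence $p\sigma_v^{-1}(c)=0$. I then set $z_1=z-\sigma_v^{-1}(c)\,v$; by Lemma~\ref{le:prime1} one has $\sigma_v^{-1}(c)\,v\equiv vc\pmod{S_{<v}}$, so $z_1\in S_{<v}$ — that is, $z_1=0$ or its leading monomial is strictly below $v$ — and $pz_1=pz-\bigl(p\sigma_v^{-1}(c)\bigr)v=0$. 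The induction hypothesis yields $az_1=0$, and therefore $az=az_1+\bigl(a\sigma_v^{-1}(c)\bigr)v=0$, which closes the induction and proves (i).

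For (ii) I run the symmetric argument on the left, with $b=\sigma_w^{-1}(a)$. For $z\in\lann_S(p)$ with leading monomial $v$, writing $z=vc+z'$ and computing $zp=(vc+z')(wa+f)$ modulo $S_{<vw}$ — now using Lemma~\ref{le:prime1} in the form $cw\equiv w\sigma_w(c)\pmod{S_{<w}}$ — one gets $zp\equiv vw\,\sigma_w(c)a\pmod{S_{<vw}}$, so $zp=0$ forces $\sigma_w(c)a=0$; since $a=\sigma_w(b)$ and $\sigma_w$ is an automorphism this reads $\sigma_w(cb)=0$, i.e.\ $cb=0$, hence $c\in\lann_R(b)\subseteq\lann_R(p)$ and $cp=0$. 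Here the correction term is just $vc$ itself: $z_1=z-vc=z'\in S_{<v}$ satisfies $z_1p=zp-(vc)p=-v(cp)=0$; the induction hypothesis gives $z_1b=0$, and $zb=z_1b+v(cb)=0$.

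I expect the only genuinely delicate point to be the bookkeeping in the first step: checking that, among the four products in the expansion of $pz$ (respectively $zp$), every term other than the single one supplying the coefficient of the monomial $wv$ (respectively $vw$) lies in $S_{<wv}$ (respectively $S_{<vw}$). This is a routine but careful application of Lemma~\ref{le:prime1} and the three inclusions of Lemma~\ref{le:prime2}; the rest — the well-foundedness of $<$, the freeness of $S$ as a right $R$-module, and the invertibility of $\sigma_v$ and $\sigma_w$ — is formal.
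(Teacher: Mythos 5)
Your proof is correct and follows essentially the same route as the paper: the same expansion of $pz$ (resp.\ $zp$) into four products, the same use of Lemmas~\ref{le:prime1} and~\ref{le:prime2} to isolate the coefficient of the top monomial, the same transfer via the automorphism $\sigma_v$ (resp.\ $\sigma_w$), and the same descent to a smaller leading monomial. The only difference is presentational — you run a direct well-founded induction where the paper takes a minimal counterexample and derives a contradiction — which is logically the same argument.
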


\begin{proof}
We prove only (ii), the proof of (i) being similar. Suppose that
\(\lann_S(p)\not\subseteq\lann_S\bigl(\sigma_w^{-1}(a)\bigr)\) and take 
\(h\in\lann_S(p)\setminus\lann_S\bigl(\sigma_w^{-1}(a)\bigr)\) of the form
\(h=uc+g\), with \(u\in\langle X\rangle\), \(c\in R\setminus\{0\}\) and \(g\in S_{<u}\) 
such that \(u\) is minimal among the leading monomials of the elements of 
\(\lann_S(p)\setminus\lann_S\bigl(\sigma_w^{-1}(a)\bigr)\). Then
\[
0=hp = (uc+g)(wa+f) = ucwa + ucf + gwa + gf.
\]
By Lemma~\ref{le:prime2}, \(ucf + gwa + gf\in S_{<uw}\) and, by Lemma~\ref{le:prime1},
\(ucwa \equiv uw\sigma_w(c)a \pmod{S_{<uw}}\). It follows that \(\sigma_w(c)a=0\) and, hence,
\(c\sigma_w^{-1}(a)=0\). By the hypothesis, \(cp=0\). But this implies that \(h-uc\in 
\lann_S(p)\setminus\lann_S\bigl(\sigma_w^{-1}(a)\bigr)\), contradicting the minimality of \(u\).
\end{proof}

\begin{corollary}\label{cor:prime}
Suppose that \(\sigma\) is upper triangular and that \(\sigma_{ii}\in\Aut(R)\) for
all \(i=1,\dots,n\). Let \(I\) be a nonzero right (respectively, left) \(R\)-submodule of \(S\) and
let \(p=wa+f\in I\), with \(w\in\langle X\rangle\), \(a\in R\setminus\{0\}\) and \(f
\in S_{<w}\) such that \(w\) is minimal among the leading monomials of nonzero elements of \(I\). 
Then \(a\rann_S(p)=0\) 
(respectively, \(\lann_S(p)\sigma_w^{-1}(a)=0\)).
\end{corollary}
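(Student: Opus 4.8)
The plan is to deduce both halves of the statement directly from Lemma~\ref{le:prime3}, by verifying in each case the hypothesis on the annihilators taken inside \(R\). Throughout I would use that, since \(\sigma_{ii}\in\Aut(R)\) for all \(i\), the map \(\sigma_w=\sigma_{j_rj_r}\dotsm\sigma_{j_1j_1}\) (for \(w=x_{j_1}\dotsm x_{j_r}\)) is an automorphism of \(R\) for every \(w\in\langle X\rangle\), so that \(\sigma_w^{-1}(a)\) makes sense; I would also use that \(S_{<w}\) is a right \(R\)-submodule of \(S\) and that \(RS_{<w}\subseteq S_{<w}\) by Lemma~\ref{le:prime2}(i).

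First, the right-module case. To invoke Lemma~\ref{le:prime3}(i) it suffices to show \(\rann_R(a)\subseteq\rann_R(p)\). So I would take \(b\in R\) with \(ab=0\) and compute \(pb=(wa+f)b=w(ab)+fb=fb\in S_{<w}\); since \(I\) is a right \(R\)-submodule containing \(p\), also \(pb\in I\). If \(pb\) were nonzero it would be a nonzero element of \(I\) whose leading monomial is strictly smaller than \(w\), contradicting the minimality of \(w\); hence \(pb=0\), i.e.\ \(b\in\rann_R(p)\). Lemma~\ref{le:prime3}(i) then gives \(\rann_S(p)\subseteq\rann_S(a)\), which is exactly \(a\rann_S(p)=0\).

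The left-module case is symmetric, the one extra ingredient being that a scalar must be commuted past \(w\). To invoke Lemma~\ref{le:prime3}(ii) it suffices to show \(\lann_R\bigl(\sigma_w^{-1}(a)\bigr)\subseteq\lann_R(p)\). So I would take \(c\in R\) with \(c\sigma_w^{-1}(a)=0\); applying the ring homomorphism \(\sigma_w\) yields \(\sigma_w(c)a=0\). By Lemma~\ref{le:prime1}, \(cw=w\sigma_w(c)+g\) for some \(g\in S_{<w}\), so
\[
cp=c(wa+f)=w\sigma_w(c)a+ga+cf=ga+cf .
\]
Here \(ga\in S_{<w}\) because \(S_{<w}\) is a right \(R\)-submodule, and \(cf\in RS_{<w}\subseteq S_{<w}\) by Lemma~\ref{le:prime2}(i), so \(cp\in S_{<w}\); moreover \(cp\in I\) since \(I\) is a left \(R\)-submodule. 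As before, minimality of \(w\) forces \(cp=0\), i.e.\ \(c\in\lann_R(p)\). Lemma~\ref{le:prime3}(ii) then gives \(\lann_S(p)\subseteq\lann_S\bigl(\sigma_w^{-1}(a)\bigr)\), that is, \(\lann_S(p)\sigma_w^{-1}(a)=0\).

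I do not expect a genuine obstacle here: the statement is essentially a bookkeeping corollary of Lemma~\ref{le:prime3}. The only point needing a little care is the left-sided computation, where upper triangularity of \(\sigma\) and invertibility of the \(\sigma_{ii}\) (hence of \(\sigma_w\)) are used precisely to rewrite \(cw\) modulo \(S_{<w}\) via Lemma~\ref{le:prime1}; the rest is the standard ``a smaller leading monomial contradicts minimality'' argument.
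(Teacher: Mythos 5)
Your proof is correct and follows essentially the same route as the paper: verify the hypothesis of Lemma~\ref{le:prime3} by showing that \(pb\) (resp.\ \(cp\)) lands in \(I\cap S_{<w}\) and hence vanishes by minimality of \(w\). The paper only writes out the right-sided case; your explicit left-sided computation via Lemma~\ref{le:prime1} and Lemma~\ref{le:prime2}(i) is exactly the intended symmetric argument.
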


\begin{proof}
Take \(b\in\rann_R(a)\). Then, \(pb\in I\) and \(pb=(wa+f)b=fb\). By the minimality of
\(w\), one must have \(pb=0\). Hence, by Lemma~\ref{le:prime3}(i), \(\rann_S(p)\subseteq
\rann_S(a)\). So, \(a\rann_S(p)=0\).
\end{proof}

\begin{theorem}\label{th:prime}
Let \(R\) be a ring, let \(n\) be a positive integer, let \(\sigma\colon R\to M_n(R)\) 
be a ring homomorphism, let \(\delta\colon R\to R^n\) be a right \(\sigma\)-derivation 
and let \(S=R\langle x_1,\dots,x_n;\sigma,\delta\rangle\) be the \((\sigma,\delta)\)-free skew extension of \(R\) generated by \(x_1,\dots,x_n\). Suppose that \(\sigma\) is
upper triangular and that \(\sigma_{ii}\in\Aut(R)\) for all \(i=1,\dots,n\). Then, the
following statements are equivalent.
\renewcommand{\labelenumi}{(\alph{enumi})}
\begin{enumerate}
    \item \(S\) is a prime ring.
		\item \(aSb\neq 0\), for all \(a,b\in R\setminus\{0\}\).
\end{enumerate}
\end{theorem}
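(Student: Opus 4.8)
The implication (a)$\Rightarrow$(b) is immediate: if $S$ is prime and $a,b\in R\setminus\{0\}$, then $a$ and $b$ are nonzero elements of $S$, so $aSb\neq 0$. For the converse my plan is to argue by contradiction, applying Corollary~\ref{cor:prime} twice — once to a right ideal and once to a left ideal — and then threading the two resulting annihilator identities through the relation that witnesses non-primeness.

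Assume (b) holds but $S$ is not prime, so there exist nonzero two-sided ideals $I$ and $J$ of $S$ with $IJ=0$. Since $<$ well-orders $\langle X\rangle$, I would pick $p\in I\setminus\{0\}$ whose leading monomial $w_1$ is minimal among the leading monomials of nonzero elements of $I$, and write $p=w_1a+f$ with $a\in R\setminus\{0\}$ and $f\in S_{<w_1}$; likewise pick $q\in J\setminus\{0\}$ whose leading monomial $w_2$ is minimal among those of nonzero elements of $J$, and write $q=w_2b+g$ with $b\in R\setminus\{0\}$ and $g\in S_{<w_2}$. Viewing $I$ as a nonzero right $R$-submodule of $S$, Corollary~\ref{cor:prime} yields $a\,\rann_S(p)=0$; viewing $J$ as a nonzero left $R$-submodule of $S$, it yields $\lann_S(q)\,\sigma_{w_2}^{-1}(b)=0$, where $\sigma_{w_2}$ is invertible on $R$ because it is a composite of the automorphisms $\sigma_{ii}$.

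Next I would chain these two facts. As $p\in I$ and $IJ=0$, every element of $J$ lies in $\rann_S(p)$, hence $aJ\subseteq a\,\rann_S(p)=0$; and since $J$ is a left ideal with $q\in J$, this gives $aSq\subseteq aJ=0$. Consequently $aS\subseteq\lann_S(q)$, so that
\[
aS\,\sigma_{w_2}^{-1}(b)\subseteq\lann_S(q)\,\sigma_{w_2}^{-1}(b)=0.
\]
Setting $\beta=\sigma_{w_2}^{-1}(b)$, a nonzero element of $R$, we reach $aS\beta=0$ with $a,\beta\in R\setminus\{0\}$, contradicting (b); hence $S$ is prime.

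All the work about how $aw$ behaves modulo $S_{<w}$ under the triangularity of $\sigma$, together with the minimality arguments controlling leading monomials, has already been absorbed into Lemmas~\ref{le:prime1}--\ref{le:prime3} and Corollary~\ref{cor:prime}, so the remaining argument is the short annihilator chase above; the conceptual point is realizing one should apply Corollary~\ref{cor:prime} on both sides and connect them via $IJ=0$. The step I would be most careful with is the bookkeeping: checking that a two-sided ideal of $S$ genuinely qualifies as the one-sided $R$-submodule required by Corollary~\ref{cor:prime}, that $w_1$ and $w_2$ are minimal in precisely the prescribed families, and that the passage from $aJ=0$ to $aS\beta=0$ uses only that $J$ is a left ideal — this is also where the hypothesis $\sigma_{ii}\in\Aut(R)$ re-enters in the converse direction, via $\sigma_{w_2}^{-1}$.
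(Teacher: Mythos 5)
Your proof is correct and follows essentially the same route as the paper: both arguments apply Corollary~\ref{cor:prime} once on each side (the right version to the ideal playing the role of $I$, the left version to the one it annihilates) and then combine the two annihilator identities to produce nonzero $a,\beta\in R$ with $aS\beta=0$, contradicting (b). The only cosmetic difference is that the paper phrases the second ideal as $J=\rann_S(I)$ and extracts nonzero elements of $J\cap R$ and $\lann_S(J)\cap R$, whereas you work directly from $IJ=0$; the content is identical.
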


\begin{proof}
Let us prove that (b) implies (a), the other implication being obvious. Suppose \(S\) is
not prime and let \(I\) be a nonzero ideal of \(S\) such that \(J=\rann_S(I)\neq 0\). Let
\(p=wa+f\in J\), with \(w\in\langle X\rangle\), \(a\in R\setminus\{0\}\) and \(f
\in S_{<w}\) such that \(w\) is minimal among the leading monomials of nonzero elements of 
\(J\). Because \(I\subseteq \lann_S(J)\subseteq \lann_S(p)\),
by Corollary~\ref{cor:prime}, \(I\sigma_w^{-1}(a)=0\), that is, \(\sigma_w^{-1}(a)\in
\rann_S(I)\cap R=J\cap R\). So \(J\cap R\neq 0\). Since \(I\) is a right \(R\)-submodule
of \(S\), another application of Corollary~\ref{cor:prime} gives \(\lann_S(J)\cap R\neq 0\).
Taking \(0\neq b\in \lann_S(J)\cap R\) and \(0\neq c\in J\cap R\) one gets \(bSc=b(Sc)\subseteq bJ=0\), contradicting (b).
\end{proof}

We remark that Theorem~\ref{th:prime} holds more generally for triangularizable 
homomorphisms \(\sigma\colon R\to M_n(R)\). More precisely, suppose that there
exists an invertible \(P\in M_n(R)\) such that, the map
\[
\begin{array}{rcl}
\tau\colon R & \longrightarrow & M_n(R)\\
a & \longmapsto & P\sigma(a)P^{-1}
\end{array}
\]
is a ring homomorphism such that \(\tau_{ij}\) is the zero map for all \(i>j\) and
\(\tau_{ii}\in\Aut(R)\), for all \(i=1,\dots,n\). Given a right \(\sigma\)-derivation
\(\delta \colon R\to R^n\), the map \(\rho\colon R\to R^n\) defined by \(\rho(a) =
\delta(a)P^{-1}\), for all \(a\in R\), is a right \(\tau\)-derivation. Let
\(S=R\langle x_1,\dots,x_n;\sigma,\delta\rangle\) 
and \(S'=R\langle y_1,\dots, y_n;\tau,\rho\rangle\).
By
\cite[Corollary~21]{uM2020}, there exists an \(R\)-ring isomorphism 
\(\varphi\colon S\to S'\).
Thus, if \(aSb\neq 0\) for all \(a,b\in R\setminus\{0\}\), then, applying \(\varphi\),
one obtains that \(a S'b\neq 0\) for all \(a,b\in R\setminus\{0\}\). By 
Theorem~\ref{th:prime}, \(S'\) is prime and, so, \(S\) is prime.

For a not necessarily triangular homomorphism \(\sigma\) and an arbitrary
\(\sigma\)-derivation \(\delta\), recall that, as we have seen in Section~\ref{sec:domain}, 
\(R\langle x_1,\dots,x_n;\sigma,0\rangle\) can be regarded as the associated graded
ring of \(R\langle x_1,\dots,x_n;\sigma,\delta\rangle\) with respect to the
filtration defined by the pseudo-valuation \(\deg\). Hence, using \cite[Lemma~II.3.2.7]{HvO1996},
the following
reduction can be made:

\begin{proposition}
Let \(R\) be a ring, let \(n\) be a positive integer and let \(\sigma\colon R\to M_n(R)\) 
be a ring homomorphism. If \(R\langle x_1,\dots,x_n;\sigma,0\rangle\) is a prime 
(respectively, semiprime) ring,
then so is \(R\langle x_1,\dots,x_n;\sigma,\delta\rangle\) for
any right \(\sigma\)-derivation \(\delta\colon R\to R^n\). \qed
\end{proposition}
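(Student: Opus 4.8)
The plan is to equip $S=R\langle x_1,\dots,x_n;\sigma,\delta\rangle$ with the filtration $\{S_m\}_{m\geq 0}$ coming from the pseudo-valuation $\deg$ of \eqref{eq:ps} and to transfer primeness (respectively semiprimeness) from its associated graded ring to $S$ itself. Recall that $1\in S_0$, that $S_mS_{m'}\subseteq S_{m+m'}$ and that $\bigcup_{m\geq 0}S_m=S$, so that, setting $S_m=0$ for $m<0$, $S$ is a positively $\mathbb{Z}$-filtered ring, with associated graded ring $\mathrm{gr}(S)=\bigoplus_{m\geq 0}S_m/S_{m-1}$. As was observed at the end of Section~\ref{sec:domain}, $\mathrm{gr}(S)$ is, as a graded ring, isomorphic to $R\langle x_1,\dots,x_n;\sigma,0\rangle$: one checks that $\mathrm{gr}(S)$ is freely generated as a right $R$-module by the images $\mathrm{gr}(x_1),\dots,\mathrm{gr}(x_n)$ and their products, and that upon discarding the degree-zero term $\delta_j(a)$ the commutation rule \eqref{x_ia} becomes $a\,\mathrm{gr}(x_j)=\sum_{i=1}^n\mathrm{gr}(x_i)\sigma_{ij}(a)$, whence the uniqueness results of Section~\ref{sec:def} identify $\mathrm{gr}(S)$ with $R\langle x_1,\dots,x_n;\sigma,0\rangle$.

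Next I would recall the standard symbol formalism for positively filtered rings, as packaged in \cite[Lemma~II.3.2.7]{HvO1996}. For $0\neq f\in S$ put $\mathrm{gr}(f)=f+S_{\deg f-1}\in S_{\deg f}/S_{\deg f-1}$; then for all $0\neq f,g\in S$ one has $\mathrm{gr}(f)\,\mathrm{gr}(g)=fg+S_{\deg f+\deg g-1}$ in $\mathrm{gr}(S)$, so in particular $fg=0$ forces $\mathrm{gr}(f)\,\mathrm{gr}(g)=0$. For a two-sided ideal $I$ of $S$, the graded subgroup $\mathrm{gr}(I)=\bigoplus_{m\geq 0}(I\cap S_m+S_{m-1})/S_{m-1}$ is a two-sided ideal of $\mathrm{gr}(S)$ whose homogeneous components are spanned by symbols $\mathrm{gr}(a)$ with $a\in I$; consequently, if $I$ and $J$ are ideals of $S$ then $\mathrm{gr}(I)\,\mathrm{gr}(J)\subseteq\mathrm{gr}(IJ)$, since each product $\mathrm{gr}(a)\,\mathrm{gr}(b)$ with $a\in I$ and $b\in J$ is either $0$ or equal to $\mathrm{gr}(ab)$ with $ab\in IJ$. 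Finally, if $\mathrm{gr}(I)=0$ then, reading off degrees, $I\cap S_m\subseteq S_{m-1}$ for every $m$; since $I\cap S_0\subseteq S_{-1}=0$, an easy induction yields $I\cap S_m=0$ for all $m$, and hence $I=\bigcup_{m}(I\cap S_m)=0$.

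The two implications then follow at once. If $S$ is not prime, choose nonzero ideals $I$ and $J$ of $S$ with $IJ=0$; then $\mathrm{gr}(I)\,\mathrm{gr}(J)\subseteq\mathrm{gr}(IJ)=0$, and since $\mathrm{gr}(S)\cong R\langle x_1,\dots,x_n;\sigma,0\rangle$ is prime we conclude $\mathrm{gr}(I)=0$ or $\mathrm{gr}(J)=0$, hence $I=0$ or $J=0$, a contradiction. If $S$ is not semiprime, choose a nonzero ideal $I$ of $S$ with $I^2=0$; then $\mathrm{gr}(I)^2\subseteq\mathrm{gr}(I^2)=0$, semiprimeness of $\mathrm{gr}(S)$ gives $\mathrm{gr}(I)=0$, hence $I=0$, again a contradiction. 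I do not anticipate a real obstacle: everything past the elementary bookkeeping is absorbed into \cite[Lemma~II.3.2.7]{HvO1996}, and the only point deserving a careful line or two is the graded-ring identification $\mathrm{gr}(S)\cong R\langle x_1,\dots,x_n;\sigma,0\rangle$ --- precisely the fact, flagged in Section~\ref{sec:domain}, that passing to the associated graded ring erases the derivation $\delta$.
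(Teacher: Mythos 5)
Your argument is correct and follows exactly the route the paper takes: identify $R\langle x_1,\dots,x_n;\sigma,0\rangle$ with the associated graded ring of $S$ for the filtration $\{S_m\}$ induced by $\deg$, and lift primeness (respectively semiprimeness) from the graded ring to the filtered ring. The only difference is that you spell out the content of \cite[Lemma~II.3.2.7]{HvO1996} (the symbol calculus, $\mathrm{gr}(I)\,\mathrm{gr}(J)\subseteq\mathrm{gr}(IJ)$, and $\mathrm{gr}(I)=0\Rightarrow I=0$) where the paper simply cites it.
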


%

\end{document}